\def\Z{\mathbb{Z}}
\def\N{\mathbb{N}}
\def\R{\mathbb{R}}
\DeclareMathOperator{\supp}{supp}
\newtheorem{theorem}{Theorem}
\newtheorem{conjecture}{Conjecture}
\newtheorem{definition}{Definition}
\newtheorem{proposition}{Proposition}
\newtheorem{lemma}{Lemma}
\newcolumntype{L}[1]{>{\raggedright\arraybackslash}p{#1}} 
\newcolumntype{C}[1]{>{\centering\arraybackslash}p{#1}} 
\newcolumntype{R}[1]{>{\raggedleft\arraybackslash}p{#1}} 
\title{The NL-flow polynomial}
\author{Barbara Altenbokum, Winfried Hochst\"attler, Johanna Wiehe \\ FernUniversit\"at in Hagen, Germany}
\date{}
\begin{document}

\maketitle

\begin{abstract}In 1982 V\'{i}ctor Neumann-Lara \cite{neumannlara}
  introduced the dichromatic number of a digraph $D$ as the smallest
  integer $k$ such that the vertices $V$ of $D$ can be colored with
  $k$ colors and each color class induces an acyclic digraph.
  In \cite{hochstarticle} a flow theory for the dichromatic number 
  transferring Tutte's theory of nowhere-zero flows (NZ-flows) from classic 
  graph colorings has been developed. The purpose of this paper is 
  to pursue this analogy by introducing a new definition of algebraic Neumann-Lara-flows 
  (NL-flows) and a closed formula for their polynomial.\\
  Furthermore we generalize the Equivalence Theorem for nowhere-zero flows 
  to NL-flows in the setting of regular oriented matroids. Finally we discuss
  computational aspects of computing the NL-flow polynomial
  for orientations of complete digraphs and obtain a closed formula in the
  acyclic case.
\end{abstract}
\setlength{\parindent}{0em}

\section{Introduction}

Large parts of graph theory have been driven by the Four Color Problem. In
particular it inspired William T.\ Tutte to develop his theory of
nowhere-zero flows \cite{tuttecon}.

In 1982 V\'{i}ctor Neumann-Lara \cite{neumannlara} introduced the
dichromatic number of a digraph $D$ as the smallest integer $k$ such
that the vertices $V$ of $D$ can be colored with $k$ colors and each
color class induces an acyclic digraph. Moreover, in 1985 he
conjectured, that every orientation of a simple planar graph can be
acyclically colored with two colors. This intrigueing problem led us
to trying to look for an analogy following Tutte's road map and develop a
corresponding flow theory, which we named {\sc Neumann-Lara}-flows (see \cite{hochstarticle}, \cite{altenbokum}).\\
First, we renew some definitions in order to simplify the notation in the forthcoming proofs. In Section $3$ we define the NL-flow polynomial. A short excursion to regular matroids yields the Equivalence Theorem for NL-flows in a general setting in Section $4$. Afterwards we discuss some computational aspects of the NL-flow polynomial for orientations of complete digraphs.\\
Our notation is fairly standard and, if not explicitely defined, should follow the books of Diestel \cite{diestel} for graphs and Bj\"orner et.\ al.\ \cite{BLSW} for oriented matroids. Note that all our digraphs may have parallel and antiparallel arcs.

\section{NL-flows and NL-coflows}

Let $D=(V,A)$ be a digraph and $G$ a finite Abelian group. A map $f:A\longrightarrow G$ is a ($G$-) flow in $D$ if it satisfies Kirchhoff's law of flow conservation 
\begin{flalign}\label{conservationcondition}
\sum_{a \in \partial^+(v)}f(a)=\sum_{a \in \partial^-(v)}f(a)
\end{flalign}

in every vertex $v \in V$. Let $n$ be the number of vertices, $m$ the number of arcs and $M$ the $n \times m $ -incidence matrix of $D$. We may identify $f$ with the $m \times 1$ -vector $f:=(f(a_1),...,f(a_m))^\top$ and with this notation, the conservation condition (\ref{conservationcondition}) is equivalent to the matrix equation
\begin{flalign}\label{Mf=0}
Mf=0,
\end{flalign}

where $0$ denotes the $n \times 1$ -zero vector. 

\begin{definition}
Let $D=(V,A)$ be a digraph and $G$ a finite Abelian group. An NL-$G$-flow in $D$ is a flow \mbox{$f: A \longrightarrow G$} in $D$ such that $D/supp(f)$ is totally cyclic. \\
For $k \geq 2$ and $G=\Z$, a flow $f$ is an NL-$k$-flow, if for all $a \in A$
$$ f(a) \in \lbrace 0, \pm 1,..., \pm(k-1)\rbrace,$$
satisfying (\ref{conservationcondition}) such that contracting its support yields a totally cyclic digraph.
\end{definition}

In the following we show that this definition is consistent with the one given in \cite{hochstarticle}, where a \textsc{Neumann-Lara} flow (NL-flow) is defined as a pair \mbox{$(f_1,f_2):A \longrightarrow \mathbb{Z}^2$} of flows related by the condition
\begin{flalign}\label{f1f2}
f_1(a)=0 \Rightarrow f_2(a)>0 \text{ , for all } a \in A.
\end{flalign}

\begin{definition}
Let $D=(V,A)$ denote a digraph. A set of arcs $S \subseteq A$ is a \emph{dijoin}, if $S$ intersects every directed cut.
\end{definition}

\begin{lemma}\label{dijoin}
$S \subseteq A$ is a dijoin if and only if $D/S$ is totally cyclic, i.e.\ every component is strongly connected.
\end{lemma}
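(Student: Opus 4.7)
The plan is to prove both implications by contrapositive, exploiting the following dictionary: the vertices of $D/S$ are the connected components of the undirected graph $(V,S)$, so a subset $W\subseteq V(D/S)$ lifts to its $S$-saturated preimage $U\subseteq V$ (meaning no arc of $S$ has exactly one endpoint in $U$), and for such $U$ one has $\partial^\pm_{D/S}(W)=\partial^\pm_D(U)$ since no arc of $S$ contributes to either side.

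Before using this, I would record the auxiliary fact that a digraph $H$ is totally cyclic if and only if there is no proper nonempty $W\subsetneq V(H)$ with $\partial^-_H(W)=\emptyset$ and $\partial^+_H(W)\neq\emptyset$. The ``only if'' is immediate; for the ``if'', in any weak component that fails to be strongly connected, one takes $W$ to be the vertex set of a source strong component of the condensation, which has $\partial^-_H(W)=\emptyset$ and, because its weak component is weakly connected and strictly larger than $W$, $\partial^+_H(W)\neq\emptyset$.

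For the direction ``$S$ dijoin $\Rightarrow D/S$ totally cyclic'', I would assume $D/S$ is not totally cyclic and pick such a witness $W\subseteq V(D/S)$. Its $S$-saturated lift $U\subsetneq V$ satisfies $\partial^-_D(U)=\emptyset$ and $\partial^+_D(U)\neq\emptyset$, so $\partial^+_D(U)$ is a directed cut of $D$; $S$-saturation moreover gives $\partial^+_D(U)\cap S=\emptyset$, contradicting the dijoin property.

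For the converse, suppose $S$ is not a dijoin and choose $U\subsetneq V$ with $\partial^-_D(U)=\emptyset$, $\partial^+_D(U)\neq\emptyset$ and $\partial^+_D(U)\cap S=\emptyset$. The one mildly subtle point — and really the heart of the lemma — is that this $U$ is automatically $S$-saturated: $\partial^-_D(U)=\emptyset$ a fortiori gives $\partial^-_D(U)\cap S=\emptyset$, while $\partial^+_D(U)\cap S=\emptyset$ is given, so no arc of $S$ crosses $\partial(U)$. Hence $U$ descends to some $W\subseteq V(D/S)$ with $\partial^-_{D/S}(W)=\emptyset$ and $\partial^+_{D/S}(W)\neq\emptyset$, witnessing by the auxiliary fact that $D/S$ is not totally cyclic. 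I do not expect any real obstacle beyond this saturation observation; everything else is routine bookkeeping between $D$ and $D/S$.
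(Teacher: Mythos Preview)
Your proof is correct and follows essentially the same route as the paper's: both argue by contrapositive via the equivalence ``totally cyclic $\Leftrightarrow$ no directed cut'' together with the correspondence between directed cuts of $D/S$ and directed cuts of $D$ disjoint from $S$. The paper compresses this into four sentences, taking for granted precisely the bookkeeping you spell out (the $S$-saturation observation and the lift/descent of cuts between $D$ and $D/S$).
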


\begin{proof}
If $D/S$ is totally cyclic, then it does not contain a directed cut. Hence $S$ must have intersected every directed cut. If $D/S$ is not totally cyclic it contains a directed cut, which is a directed cut in $D$ as well. Hence $S$ is not a dijoin.
\end{proof}

\begin{proposition}\label{supp dijoin}
Let $D=(V,A)$ be a digraph. A pair of flows $(f_1,f_2)$ is an NL-flow in $D$ if and only if supp$(f_1)$ is a dijoin.
\end{proposition}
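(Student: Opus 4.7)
The plan is to invoke Lemma \ref{dijoin} to replace ``$\text{supp}(f_1)$ is a dijoin'' by ``$D/\text{supp}(f_1)$ is totally cyclic'', and then to relate the latter to condition (\ref{f1f2}) on the pair $(f_1,f_2)$. Throughout write $S:=\text{supp}(f_1)$, so that the arcs of $D/S$ are precisely those $a$ with $f_1(a)=0$.

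For the ``only if'' direction, I would assume (\ref{f1f2}). Then $f_2$ is strictly positive on $A\setminus S$, and $g:=f_2|_{A\setminus S}$ is still a flow on $D/S$: at any super-vertex of $D/S$, Kirchhoff's law is obtained by summing the conservation equation for $f_2$ over the vertices of the corresponding connected component of $(V,S)$, whereby the contracted (internal) arcs cancel in pairs. Decomposing the strictly positive integer flow $g$ as a nonnegative integer combination of indicator flows of directed cycles shows that every arc of $D/S$ lies on a directed cycle, so $D/S$ is totally cyclic, and Lemma \ref{dijoin} concludes.

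For the ``if'' direction the statement has to be read as the existence of some $f_2$ completing a given $f_1$ to an NL-flow. Assume $D/S$ is totally cyclic. Summing the indicator flows of a family of directed cycles covering $A\setminus S$ produces a strictly positive integer flow $g$ on $D/S$. It remains to lift $g$ to a flow $f_2$ on $D$ with $f_2|_{A\setminus S}=g$. In each connected component $W$ of $(V,S)$, the arcs of $A\setminus S$ incident to $W$ induce, via $g$, integer demands at the vertices of $W$ whose sum vanishes (by conservation of $g$ at the super-vertex corresponding to $W$, using that no arc of $S$ crosses components of $(V,S)$). Choosing a spanning tree in the underlying graph of the $S$-arcs internal to $W$ and propagating demands from leaves inwards yields integer values on those tree arcs realising the demands, with the remaining $S$-arcs set to $0$. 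The resulting $f_2$ is a flow on $D$ that agrees with $g>0$ on $A\setminus S$, so (\ref{f1f2}) holds.

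The main obstacle is this lifting step; the forward direction is essentially bookkeeping around contraction. The key point for the lift is that the demand totals over each component $W$ vanish precisely because $S$ has no arcs between distinct components of $(V,S)$, so that the demand at $W$ coincides with the boundary of $g$ at the super-vertex of $D/S$ coming from $W$, which is zero.
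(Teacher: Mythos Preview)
Your argument is correct and follows the same route as the paper: reduce via Lemma~\ref{dijoin} to the total cyclicity of $D/\operatorname{supp}(f_1)$, use that $f_2$ restricts to a strictly positive flow on the contraction for the forward direction, and for the converse lift a strictly positive flow on $D/\operatorname{supp}(f_1)$ back to $D$. The only difference is cosmetic: where the paper invokes ``Linear Algebra'' for the lifting step, you carry it out explicitly via the spanning-tree propagation, and you spell out the cycle-decomposition reason why a strictly positive flow forces total cyclicity.
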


\begin{proof}
Consider an NL-flow $(f_1,f_2)$ in $D$. By (\ref{f1f2}), the second component $f_2$ is strictly positive outside supp$(f_1)$. Thus, if we contract supp$(f_1)$, the restriction $f_2\vert_{A\setminus \supp(f_1)}$ is a strictly positive flow in the resulting digraph $D/$supp$(f_1)$. That is, every component of $D/$supp$(f_1)$ is strongly connected.\\
Conversely, let $f_1$ be a flow in $D$. If every component of $D/$supp$(f_1)$ is strongly connected, we certainly find a strictly positive flow \mbox{$\tilde{f_2}: A\setminus $supp$(f_1) \longrightarrow \mathbb{Z}$} in $D/$supp$(f_1)$, which in turn must be the restriction of some flow $f_2: A \longrightarrow \mathbb{Z}$ in $D$ using Linear Algebra. Combining $f_2$ and $f_1$, we have built an NL-flow $(f_1,f_2)$.\\
Lemma \ref{dijoin} completes the proof.
\end{proof}

With this definition we get an equivalence theorem in full analogy to the case of nowhere-zero flows (\cite{nesetril},\cite{tuttecon}).

\begin{theorem}\label{equiv.theorem}
Let $D=(V,A)$ be a digraph. Let $k \geq 2$ and $G$ be an Abelian group of order $k$. Then the following conditions are equivalent:
\begin{itemize}
\item[(i)] There exists an NL-$\mathbb{Z}_k$-flow in $D$.
\item[(ii)] There exists an NL-$G$-flow in $D$.
\item[(iii)] There exists an NL-$k$-flow in $D$.
\end{itemize}
\end{theorem}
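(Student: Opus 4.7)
The plan is to exploit the fact, implicit in Lemma~\ref{dijoin}, that whether a flow $f$ is an NL-flow depends \emph{only} on $\supp(f)$: namely, $D/\supp(f)$ is totally cyclic iff $\supp(f)$ is a dijoin. Each of (i), (ii), (iii) therefore asks whether a flow of the prescribed type with dijoin support exists, and the whole proof amounts to tracking how the support behaves under (a)~reduction modulo~$k$, (b)~lifting integer flows, and (c)~changing the group among abelian groups of fixed order~$k$.

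For (iii)$\Rightarrow$(i) I would take an NL-$k$-flow $f$ and reduce componentwise modulo~$k$. The resulting map $\bar f$ is a $\Z_k$-flow by~(\ref{Mf=0}), and since the only integer in $\{-(k-1),\ldots,k-1\}$ divisible by~$k$ is~$0$, one gets $\supp(\bar f)=\supp(f)$, so $\bar f$ is an NL-$\Z_k$-flow. For the converse (i)$\Rightarrow$(iii), I would apply Tutte's classical lifting argument: starting from an NL-$\Z_k$-flow $\bar f$, pick an integer flow $f\equiv\bar f\pmod{k}$ minimising $\sum_{a\in A}|f(a)|$ and argue that whenever $|f(a_0)|\geq k$ for some $a_0$, adding $\pm k$ along a suitable directed cycle through~$a_0$ strictly decreases the objective. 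The minimiser therefore satisfies $|f(a)|<k$ everywhere, and once more $\supp(f)=\supp(\bar f)$, so it is an NL-$k$-flow.

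For (i)$\Leftrightarrow$(ii) I would argue by counting. For any finite abelian group~$G$ and any digraph~$D'=(V',A')$, total unimodularity of the incidence matrix makes the set of $G$-flows in~$D'$ a subgroup of $G^{|A'|}$ of cardinality $|G|^{|A'|-|V'|+c(D')}$, independently of the group structure of~$G$. M\"obius inversion over subsets of~$A$ then expresses the number of $G$-flows of any prescribed support~$S$, and hence the total
$$\#\{\text{NL-}G\text{-flows in }D\}\;=\;\sum_{\substack{S\subseteq A\\ S\text{ dijoin}}}\#\{G\text{-flows }f\text{ in }D:\supp(f)=S\},$$
as a polynomial in $|G|$ that depends only on $|G|=k$. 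This polynomial is the same for~$G$ as for~$\Z_k$, giving (i)$\Leftrightarrow$(ii).

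I expect the lifting step in (i)$\Rightarrow$(iii) to be the main technical point, but since it is Tutte's standard argument and operates at the level of integer values without touching the dijoin condition, it transfers essentially verbatim. The only genuinely new ingredient specific to NL-flows is the observation that reduction mod~$k$, Tutte lifting and support-inclusion--exclusion all respect the dijoin condition, which is immediate because that condition is purely combinatorial in~$\supp(f)$.
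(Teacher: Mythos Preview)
Your overall strategy coincides with the paper's: (iii)$\Rightarrow$(i) by reduction mod~$k$; (i)$\Leftrightarrow$(ii) by showing that the number of NL-$G$-flows depends only on $|G|$ via M\"obius inversion together with the count $|G|^{|B|-rk(B)}$ for $G$-flows on a subgraph (this is precisely Theorem~\ref{NLflowTheo} with Lemma~\ref{crucial}); and (i)$\Rightarrow$(iii) by an integer lifting. The paper carries everything out in the generality of regular oriented matroids, but the skeleton is identical.

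There is, however, a genuine gap in your sketch of the lifting step. Minimising $\sum_{a}|f(a)|$ over integer flows $f\equiv\bar f\pmod k$ does \emph{not} in general force $\|f\|_\infty<k$, and no single $\pm k$-cycle modification need decrease this $\ell_1$-norm. Take $k=10$, two vertices $u,v$, one arc $a_0\colon v\to u$, and eleven internally disjoint directed $u$--$v$ paths of length~$2$, with $\bar f\equiv 1$ on every arc. The unique $\ell_1$-minimiser assigns $1$ to all path arcs and $f(a_0)=11\geq k$, with $\|f\|_1=33$; every cycle through $a_0$ consists of $a_0$ together with one such path, and shifting by $\pm10$ along it raises $\|f\|_1$ to $39$ or $63$. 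The lifting theorem itself is of course correct; an easy repair is to minimise instead the total overflow $\sum_a\lfloor|f(a)|/k\rfloor$, which \emph{does} strictly drop under the same conformal-cycle modification and vanishes exactly when $\|f\|_\infty<k$. The paper takes the complementary route: it fixes $y\in\{0,\pm1,\dots,\pm(k-1)\}^n$ with $y\equiv\bar f\pmod k$ minimising the defect $\|My\|_1$, and then uses the Farkas-type Theorem~\ref{farkas} to show that the minimiser must satisfy $My=0$.
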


We postpone the proof to Section \ref{equivtheo}, where it will be stated in the more general setting of regular matroids.\\

Now, recall that a map $g: A \longrightarrow G$ is a coflow in $D=(V,A)$ if it satisfies Kirchhoff's law for (weak) cycles
\begin{flalign}\label{coflowcondition}
\sum_{a \in C^+}g(a)=\sum_{a \in C^-}g(a).
\end{flalign}
Condition (\ref{coflowcondition}) is equivalent to the condition that the vector $g=(g(a_1),...,g(a_m))^\top$ is an element of the row space of M, that is $g=pM$, for some $1 \times n$ -vector $p \in G^V$. 

\begin{definition}
A \emph{feedback arc set} of a digraph $D=(V,A)$ is a set $S \subseteq A$ such that $D-S$ is acyclic. 
\end{definition}

\begin{definition}
Let $D=(V,A)$ be a digraph and $G$ a finite Abelian group. An NL-coflow in $D$ is a coflow $g: A \longrightarrow G$ in $D$ whose support contains a feedback arc set. For $k \in \mathbb{Z}$, a coflow $g$ is an NL-$k$-coflow if, in addition,
$$ g(a) \in \lbrace 0,\pm 1,...,\pm (k-1) \rbrace \text{ , for all } a \in A,$$
satisfying (\ref{coflowcondition}) such that its support contains a feedback arc set.
\end{definition}

Again, we will show that this definition is consistent with the one given in \cite{hochstarticle}, where a \textsc{Neumann-Lara} coflow (NL-coflow) is defined as a pair \mbox{$(g_1,g_2): A \longrightarrow \mathbb{Z}^2$} of coflows related by the condition
 \begin{flalign}\label{g1g2}
g_1(a)=0 \Rightarrow g_2(a)>0 \text{ for all } a \in A.
\end{flalign}

\begin{proposition}
Let $D=(V,A)$ be a digraph. A pair of coflows $(g_1,g_2)$ is an NL-coflow in $D$ if and only if supp$(g_1)$ contains a feedback arc set.
\end{proposition}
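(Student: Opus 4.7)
The plan is to mirror the proof of Proposition \ref{supp dijoin}, replacing flows by coflows and ``strongly connected components'' by ``acyclic'', and exploiting the fact that a coflow is nothing but a potential difference $g(a)=p(w)-p(v)$ for $a=(v,w)$.

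For the forward direction, suppose $(g_1,g_2)$ is an NL-coflow. By (\ref{g1g2}), $g_2$ is strictly positive on every arc of $A\setminus\supp(g_1)$. Since $g_2=pM$ for some $p\in\Z^V$, this means that for every arc $a=(v,w)\in A\setminus\supp(g_1)$ we have $p(w)>p(v)$, so $p$ is a strict topological potential on the subdigraph $D-\supp(g_1)$. Hence $D-\supp(g_1)$ is acyclic, i.e.\ $\supp(g_1)$ is (in particular contains) a feedback arc set.

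For the converse, assume $\supp(g_1)$ contains a feedback arc set $F$. Then $D-\supp(g_1)$ is a subdigraph of $D-F$, hence acyclic. Pick any topological ordering of $D-\supp(g_1)$ and let $p:V\longrightarrow\Z$ be the corresponding rank function, so that $p(w)>p(v)$ for every arc $(v,w)\in A\setminus\supp(g_1)$. Setting $g_2:=pM$ defines a coflow on all of $D$ whose values on $A\setminus\supp(g_1)$ are strictly positive by construction, while its values on $\supp(g_1)$ are arbitrary integers; in particular the pair $(g_1,g_2)$ satisfies (\ref{g1g2}) and is therefore an NL-coflow in the sense of \cite{hochstarticle}.

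The only subtle point, and the one I would double-check, is that producing $g_2$ as a \emph{coflow on $D$} (not just on $D-\supp(g_1)$) is immediate once we define it by a global potential $p$; this is the coflow analogue of the Linear-Algebra extension step used in the proof of Proposition \ref{supp dijoin}, but here it is even more transparent because coflows are parametrized by potentials on $V$ rather than required to satisfy a conservation law. Together with the definition of an NL-coflow and the preceding equivalence between acyclicity of $D-S$ and $S$ containing a feedback arc set, the two implications yield the claim.
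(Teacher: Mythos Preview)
Your proof is correct and follows essentially the same approach as the paper. Both directions use the potential representation $g_2=pM$: in the forward direction you spell out that $p$ is a strict topological potential on $D-\supp(g_1)$ (the paper just cites (\ref{g1g2}) and (\ref{coflowcondition}) and says acyclicity is immediate), and in the converse both you and the paper pick a topological ordering of $V$ in the acyclic subdigraph $D-\supp(g_1)$, set $p(v_i)=i$, and take $g_2=pM$; your remark that this automatically gives a coflow on all of $D$ is exactly the point the paper's construction relies on as well.
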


\begin{proof}
Consider an NL-coflow $(g_1,g_2)$. Condition (\ref{g1g2}) and (\ref{coflowcondition}) immediately yield that $D-$ supp$(g_1)$ is acyclic.\\
On the other hand, let $g_1$ be a coflow in $D$. If the subdigraph $D-$ supp$(g_1)$ is acyclic, we can find a strictly positive coflow $g_2$ in $D-$ supp$(g_1)$: We use a topological ordering $v_1,...,v_n$ of the vertices, define $p(v_i):=i$ and obtain a strictly positive vector $g=pM$, as pointed out in \cite{hochstarticle}. Combining $g_1$ and $g_2$, we have built an NL-coflow $(g_1,g_2)$.
\end{proof}

Recall (see e.g.\ \cite{bondy}) that a digraph $D=(V,A)$ admits a proper vertex coloring with $k$ colors if and only if there is a nowhere-zero-$k$-coflow $g:A \longrightarrow \mathbb{Z}_k$ in $D$.\\
Concerning acyclic vertex colorings and NL-coflows, a similar result is already obtained in \cite{hochstarticle}:

\begin{theorem}[Hochst\"attler]
A loopless digraph $D=(V,A)$ admits an acyclic vertex coloring with $k$ colors if and only if there is an NL-$k$-coflow in $D$.
\end{theorem}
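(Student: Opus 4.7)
The plan is to mirror the classical equivalence between nowhere-zero $k$-coflows and proper $k$-colorings, exploiting the representation of any coflow as $g = pM$ for some $p \in \mathbb{Z}^V$, and then read off an acyclic coloring from $p \bmod k$.

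For the forward direction, suppose $c : V \to \{0,1,\ldots,k-1\}$ is an acyclic vertex coloring. Setting $p := c$ (as an integer vector) and $g := pM$, I immediately get a $\mathbb{Z}$-coflow with $g(a) = c(v)-c(u) \in \{0,\pm 1,\ldots,\pm(k-1)\}$ for every arc $a=(u,v)$. The key observation is that $g(a)=0$ exactly when $a$ is monochromatic, so $A \setminus \supp(g)$ is the union of the arc sets of the subdigraphs induced by the color classes. By hypothesis every color class is acyclic, hence so is $D - \supp(g)$, which means $\supp(g)$ contains (in fact equals) a feedback arc set. Thus $g$ is an NL-$k$-coflow.

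For the converse, suppose $g$ is an NL-$k$-coflow. Write $g = pM$ for some $p \in \mathbb{Z}^V$ and define $c : V \to \mathbb{Z}_k$ by $c(v) := p(v) \bmod k$. The main (and only) subtle point is the following bounding argument: for an arc $a=(u,v)$, one has $c(u)=c(v)$ iff $g(a) = p(v)-p(u) \equiv 0 \pmod{k}$; since by hypothesis $|g(a)| \le k-1 < k$, this is equivalent to $g(a)=0$. Consequently every monochromatic arc of $c$ lies in $A \setminus \supp(g)$. If some color class $C_i$ induced a directed cycle, that cycle would sit inside $D-\supp(g)$ and \emph{a fortiori} inside $D-F$ for any feedback arc set $F \subseteq \supp(g)$, contradicting acyclicity of $D-F$. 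Hence each color class induces an acyclic subdigraph, and $c$ is an acyclic $k$-coloring.

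There is no serious obstacle here; the loopless hypothesis only ensures that neither side is trivially vacuous (loops force the same degeneracy in both formulations). The whole argument is the direct analogue of the Tutte/Bondy-style translation between nowhere-zero coflows and proper colorings referenced just above the theorem, with ``nowhere-zero'' replaced by ``support is a feedback arc set'' and ``different colors on every arc'' replaced by ``acyclic on each color class.''
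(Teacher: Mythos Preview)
Your proof is correct. Note, however, that the paper does not actually prove this theorem: it is quoted from \cite{hochstarticle} and stated without proof, so there is no in-paper argument to compare against directly. That said, your approach is precisely the expected one and is consistent with the construction the paper sketches just before Proposition~2 (building a coflow as $g=pM$ from a vertex labeling $p$, there via a topological ordering); the converse step of reducing $p$ modulo $k$ and using the bound $|g(a)|\le k-1$ to force $g(a)\equiv 0\pmod{k}\Rightarrow g(a)=0$ is the standard device from the classical coflow/coloring correspondence. There are no gaps.
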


Now, let us take a look at the planar case. Let $D=(V,A)$ be a plane digraph with plane dual $D^\ast=(V^\ast,A^\ast)$. For a map $f:A \longrightarrow \mathbb{Z}$, define a map \mbox{$f^\ast: A^\ast \longrightarrow \mathbb{Z}$} by 
$$f^\ast(a^\ast):=f(a) \text{ for each } a\in A.$$
Recall \cite{diestel} that $f:A \longrightarrow \mathbb{Z}$ is a coflow in $D$ if and only if $f^\ast$ is a flow in $D^\ast$.\\
We also transfer this relation to NL-flows and NL-coflows.

\begin{theorem}
Let $D=(V,A)$ be a plane digraph. A map $g: A \longrightarrow \mathbb{Z}$ is an NL-coflow in $D$ if and only if the map $g^\ast$ is an NL-flow in $D^\ast$.
\end{theorem}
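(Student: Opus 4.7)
The plan is to reduce each side of the biconditional to a combinatorial property of $\supp(g)$ and then to apply the classical planar cycle-bond duality.

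First I would note that the arc bijection $a\leftrightarrow a^\ast$ satisfies $\supp(g^\ast)=\{a^\ast:a\in\supp(g)\}$, and that by the flow/coflow duality recalled just above the statement, $g$ is a coflow in $D$ if and only if $g^\ast$ is a flow in $D^\ast$. Hence the ``flow'' and ``coflow'' halves of the two definitions agree automatically, and it suffices to prove equivalence of the support conditions: $\supp(g)$ is a feedback arc set of $D$ if and only if $\supp(g^\ast)$ is a dijoin of $D^\ast$ (the latter via Lemma~\ref{dijoin}).

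Unfolding once more, being a feedback arc set amounts to meeting every directed cycle of $D$, and being a dijoin amounts to meeting every directed cut of $D^\ast$; since every directed cut contains a minimal one (one can iteratively shrink any non-minimal directed cut), the dijoin condition is equivalent to $\supp(g^\ast)$ meeting every directed bond of $D^\ast$.

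The crucial step is then the classical planar cycle-bond duality for digraphs: with the standard convention for orienting $a^\ast$ (so that it crosses $a$, say, from left to right), the arc sets of the directed cycles of $D$ correspond bijectively, via $a\leftrightarrow a^\ast$, to the arc sets of the directed bonds of $D^\ast$. Hence $\supp(g)$ meets every directed cycle of $D$ if and only if $\supp(g^\ast)$ meets every directed bond of $D^\ast$. Chaining the equivalences above closes the biconditional. The only fiddly point is the orientation bookkeeping in the cycle-bond bijection, but this is the same standard computation that underlies the flow/coflow duality already invoked (see \cite{diestel}), so no real obstacle arises.
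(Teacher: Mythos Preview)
Your proposal is correct and follows essentially the same approach as the paper: both arguments reduce the claim to the correspondence ``$\supp(g)$ is a feedback arc set of $D$ $\Leftrightarrow$ $\supp(g)^\ast$ is a dijoin of $D^\ast$'' via planar duality. The only cosmetic difference is that the paper phrases this correspondence through the deletion/contraction duality ($D-S$ acyclic $\Leftrightarrow$ $D^\ast/S^\ast$ totally cyclic), whereas you unpack it one level further to the explicit directed-cycle/directed-bond bijection; these are two formulations of the same fact.
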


\begin{proof}
Deleting an arc $a \in A$ corresponds to contracting $a^\ast \in A^\ast$. Deleting a set $S \subseteq A$ of arcs until $D-S$ is acyclic corresponds to contracting a set $S^\ast \subseteq A^\ast$ until all connected components of $D^\ast / S^\ast$ are strong.\\
Hence, a feedback arc set $S \subseteq A$ corresponds to a dijoin $S^\ast \subseteq A^\ast$. Thus a coflow $g$ in $D$ whose support contains a feedback arc set corresponds to a flow $f:=g^\ast$ in $D^\ast$ whose support contains a dijoin.
\end{proof}

Finally, we are able to state Neumann-Lara's conjecture as
\begin{conjecture}
Any loopless planar digraph admits an NL-$2$-coflow.
\end{conjecture}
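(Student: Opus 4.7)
This statement is V\'ictor Neumann-Lara's celebrated $1985$ conjecture, which remains open to this day. Any honest proof proposal can therefore only sketch strategies rather than a complete argument; what follows is the road map I would pursue. The plan begins by combining the tools already developed in the paper. By the preceding planar duality theorem it suffices to produce, on every loopless plane digraph $D^{\ast}$, an NL-$2$-flow, and by Theorem~\ref{equiv.theorem} such a flow exists if and only if there is an NL-$\Z_{2}$-flow. An NL-$\Z_{2}$-flow on $D^{\ast}$ is nothing but a subset $S$ of the arc set lying in the (undirected) cycle space such that $S$ is a dijoin of $D^{\ast}$ in the sense of Lemma~\ref{dijoin}. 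The first step is therefore to reformulate the conjecture combinatorially as: every loopless plane digraph contains an even arc set whose contraction leaves every component strongly connected.

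Equipped with this reformulation, I would attempt the classical discharging road map used for the Four Colour Theorem. The two pillars are, first, a finite list of \emph{reducible configurations}, i.e.\ local oriented patterns whose presence in a minimal counterexample would allow one to delete or contract, obtain an NL-$\Z_{2}$-flow on the smaller digraph by minimality, and lift it back to $D^{\ast}$; and second, an \emph{unavoidable set} theorem, obtained by assigning charges to vertices, arcs and faces of the planar embedding and redistributing them via Euler's formula until one of the reducible configurations is forced. A complementary avenue would be to bootstrap from the Four Colour Theorem itself: a proper $4$-colouring of the underlying undirected graph partitions $V$ into four independent sets, and one could hope that some pairing of them yields an NL-$2$-coflow via the theorem of Hochst\"attler stated above.

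The reducibility step is where the main obstacle lies, and it is the reason the problem is still open. In contrast with the classical NZ-flow setting, contracting arcs in the support of a candidate flow interacts non-trivially with the dijoin condition: directed cuts may appear in the contracted digraph that were not cuts of $D^{\ast}$, so reducibility must track simultaneously the parity of the flow value and the dijoin compatibility of the contracted digraph. Every naive attempt along the Four-Colour route fails for an analogous reason, namely that cyclic triangles can be forced across any two colour classes, so what one really needs is an oriented strengthening of the Four Colour Theorem. Consequently the realistic contribution of the present framework is the algebraic reformulation afforded by Section~$3$: it would already be a significant advance to prove, for every loopless plane digraph, strict positivity of the NL-$2$-coflow polynomial, for instance by a deletion-contraction recursion restricted to totally cyclic minors.
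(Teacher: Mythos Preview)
Your assessment is correct: the paper does not prove this statement. It is merely \emph{stated} as Neumann-Lara's conjecture, reformulated in the language of NL-coflows, with no proof or proof sketch offered. Your recognition that the problem is open and that only a strategic road map can honestly be given is therefore exactly in line with the paper's own treatment.

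One small correction to your dual reformulation: if $D$ is a \emph{loopless} plane digraph, then its plane dual $D^{\ast}$ is not necessarily loopless but rather \emph{bridgeless} (loops dualize to bridges). So the equivalent flow statement is that every bridgeless plane digraph admits an NL-$2$-flow, not every loopless one. This does not affect the spirit of your outline, but it matters for any reducibility argument, since bridges are precisely the obstructions to the existence of flows in the classical theory.
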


\section{The NL-flow polynomial}\label{section NLpoly}

In contrast to the definition given in \cite{hochstreport} we will present a definition here, where the flow polynomial of the underlying graph is not involved anymore. Both definitions fulfill the same purpose, that is counting NL-$G$-flows.

We have already seen that a flow is an NL-flow if and only if its support is a dijoin of the digraph. This will be the basic observation throughout this section.

In order to develop a closed formula we use a kind of generalization of the well-known
inclusion-exclusion formula, the M\"obius inversion (see e.g.\ \cite{aigner}).

\begin{definition}
  Let $(P,\leq)$ be a finite poset, then the {\em M\"obius function}
  is defined as follows
$$ \mu: P \times P \rightarrow \mathbb{Z},\; \mu(x,y):= 
\begin{cases} 0 &\; ,\text{ if } x\nleq y \\
			  1 &\; ,\text{ if }  x=y\\
			  -\sum_{x \leq z < y} \mu(x,z) &\; ,\text{ otherwise }.
\end{cases}$$
\end{definition}

\begin{proposition}[see \cite{aigner}]\label{möbius inversion}
Let $(P,\leq)$ be a finite poset, $f,g : P \rightarrow \mathbb{K}$ functions and $\mu$ the M\"obius function. Then the following equivalence holds
$$ f(x)=\sum_{y \geq x}g(y) , \text{ for all } x \in P \Longleftrightarrow g(x)=\sum_{y \geq x}\mu(x,y)f(y), \text{ for all } x \in P.$$
\end{proposition}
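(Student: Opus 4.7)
The plan is to view both sides of the claimed equivalence as statements about matrix multiplication by the M\"obius and zeta matrices on the finite poset $P$. Fix a linear extension $x_1, \ldots, x_n$ of $\leq$ and let $Z \in \mathbb{K}^{n \times n}$ be the zeta matrix with $Z_{ij} = 1$ if $x_i \leq x_j$ and $0$ otherwise, and let $M$ be the M\"obius matrix $M_{ij} = \mu(x_i, x_j)$. Both matrices are upper triangular with $1$'s on the diagonal. Viewing $f, g : P \to \mathbb{K}$ as column vectors, the left-hand condition $f(x) = \sum_{y \geq x} g(y)$ reads exactly $f = Zg$, and the right-hand condition $g(x) = \sum_{y \geq x} \mu(x,y) f(y)$ reads exactly $g = Mf$, so the proposition reduces to the single matrix identity $M = Z^{-1}$.

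To establish $MZ = I$ I would simply unfold the defining recursion of $\mu$: one has $(MZ)_{ij} = \sum_{k\,:\, x_i \leq x_k \leq x_j} \mu(x_i, x_k)$. For $x_i = x_j$ the only term is $\mu(x_i, x_i) = 1$; for $x_i < x_j$ the recursion $\mu(x_i, x_j) = -\sum_{x_i \leq z < x_j} \mu(x_i, z)$ gives the sum as $0$; and for $x_i \nleq x_j$ the sum is empty. Hence $MZ = I$. Since $M$ and $Z$ are both upper triangular with unit diagonal, they are invertible over $\mathbb{K}$, and in a finite-dimensional matrix algebra a left inverse of a square matrix is automatically a right inverse, so $ZM = I$ as well.

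With these two matrix identities in hand the proposition follows at once: if $f = Zg$ then $Mf = MZg = g$, and conversely if $g = Mf$ then $Zg = ZMf = f$. Unfolding these matrix products recovers the two sum formulas with a single exchange of summation.

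The step I would flag as the main obstacle, should one wish to avoid the matrix reformulation, is the second orthogonality $\sum_{x \leq y \leq z} \mu(y, z) = \delta_{x,z}$ needed in the reverse direction: it is not immediate from the defining recursion, which only delivers the \emph{left} orthogonality $\sum_{x \leq z \leq y} \mu(x, z) = \delta_{x, y}$. The matrix argument absorbs this into the one-line remark about left- versus right-inverses; alternatively one can prove the missing identity directly by induction on the length of the longest chain in the interval $[x, z]$.
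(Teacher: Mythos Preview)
Your argument is correct. The paper does not supply its own proof of this proposition---it simply cites Aigner---and the proof you give is precisely the standard incidence-algebra argument one finds there: the zeta and M\"obius matrices are mutual inverses, so the two sum identities are equivalent by multiplying through. Your remark about the asymmetry between the two orthogonality relations and how the matrix viewpoint absorbs it is apt and is exactly why the reference phrases the result in that language.
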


With this so called \textit{M\"obius inversion from above} it will suffice to count all $G$-flows in some given subdigraphs. The next Lemma will be crucial not only for this purpose.

\begin{lemma}\label{crucial}
Let $G$ be an Abelian group, $M \in \lbrace 0, \pm 1 \rbrace^{m \times n}$ a totally unimodular matrix of full row rank and $b \in G^m$. Then the number of solutions of $Mx=b$ is $\vert G \vert^{(n-m)}$.
\end{lemma}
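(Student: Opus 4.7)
The plan is to reduce the problem to a bijection by choosing a nonsingular $m \times m$ submatrix of $M$ via total unimodularity, and then observing that it acts invertibly on $G^m$ because its inverse is integer-valued.

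First, since $M$ has full row rank $m$, there exists an $m \times m$ submatrix that is invertible over $\mathbb{Q}$. By total unimodularity, the determinant of every square submatrix of $M$ lies in $\{0,\pm 1\}$, so any such invertible $m \times m$ submatrix $B$ satisfies $\det(B)=\pm 1$. After permuting columns (which only relabels the coordinates of $x$ and does not change the solution count), I may write $M = [B \mid N]$ with $B \in \{0,\pm 1\}^{m \times m}$, $\det(B)=\pm 1$, and $N \in \{0,\pm 1\}^{m \times (n-m)}$. Splitting $x$ accordingly as $x = (x_B, x_N)$, the equation $Mx=b$ becomes
\[
B x_B \;=\; b - N x_N.
\]

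The key point is that $B^{-1} = \det(B)^{-1}\operatorname{adj}(B)$ also has integer entries, because $\det(B)=\pm 1$ and the adjugate of an integer matrix is integer. Hence multiplication by $B$ and by $B^{-1}$ are well-defined $\mathbb{Z}$-linear maps $G^m \to G^m$, and the identities $BB^{-1}=B^{-1}B=I$ hold as integer matrices, so they hold as endomorphisms of $G^m$ for any abelian group $G$. Consequently $x_B \mapsto Bx_B$ is a bijection of $G^m$, and for every choice of $x_N \in G^{n-m}$ there is a unique $x_B \in G^m$ with $B x_B = b - N x_N$, namely $x_B = B^{-1}(b-Nx_N)$. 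The same argument applied to $N=0,\, b=0$ verifies that distinct choices of $x_N$ produce distinct solutions.

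Counting: there are exactly $|G|^{n-m}$ choices for $x_N \in G^{n-m}$, and each yields a unique solution $x=(x_B,x_N)$ of $Mx=b$. Hence the total number of solutions is $|G|^{n-m}$. The only subtle step is the one where one has to justify that the formal inverse $B^{-1}$ gives a meaningful map on $G^m$ when $G$ is merely an abelian group (not a ring); this is precisely where total unimodularity is used, ensuring that $B^{-1}$ has integer entries so that scalar multiplication is just iterated addition and inversion in $G$.
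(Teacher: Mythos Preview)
Your proof is correct and follows essentially the same approach as the paper: pick a basis submatrix $B$ of the totally unimodular matrix, use $\det(B)=\pm 1$ to reduce $Mx=b$ to a system where the $n-m$ non-basic variables are free and each choice determines the basic variables uniquely. You are in fact more explicit than the paper about the one genuinely delicate point, namely that $B^{-1}$ has integer entries and therefore defines an honest map on $G^m$ for an arbitrary abelian group $G$; the paper's proof implicitly uses this when writing $M_{.B}^{-1}b$.
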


\begin{proof}
Choose a basis $B$ of $M$. Then
\begin{center}
\begin{tabular}{crcl}
                & $Mx$            &$=$  & $b$ \\
$\Leftrightarrow$ & $M^{-1}_{.B}Mx$ &$=$  & $M^{-1}_{.B}b$ \\
$\Leftrightarrow$ &$(I_m,\tilde{M}) \begin{pmatrix}
x_B \\ x_{\lbrace 1,...,n \rbrace \setminus B}
\end{pmatrix}$ &$=$ & $M^{-1}_{.B}b,$
\end{tabular}
\end{center}
where $\tilde{M}$ is a totally unimodular $(m \times (n-m))$-matrix. Thus, for every choice of values for the columns of $\tilde{M}$ we get exactly one solution of the equation.
\end{proof}

The basic observation that a flow is an NL-flow iff its support is a
dijoin encourages to consider the following poset $(\mathcal{C},\supseteq)$
$$\mathcal{C}:= \big \lbrace A \setminus  C \; \vert \; \exists \; C_1,...,C_r \text{ directed cuts }\text{, such that } C=\bigcup_{i=1}^r C_i \big \rbrace.$$

\begin{definition}\label{defflow}
Let $D=(V,A)$ be a digraph and $\mu$ the M\"obius function. Then the NL-flow polynomial of $D$ is defined as
$$\phi_{NL}^D(x):= \sum_{B \in \mathcal{C}}\mu(A,B)x^{\vert B \vert -rk(B)}.$$
\end{definition}

\begin{theorem}\label{NLflowTheo}
The number of NL-$G$-flows of a digraph $D$ depends only on the order $k$ of $G$ and is given by $\phi_{NL}^D(k)$.
\end{theorem}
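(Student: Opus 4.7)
The plan is to apply M\"obius inversion on the poset $(\mathcal{C},\supseteq)$ after organizing flows by the ``smallest'' element of $\mathcal{C}$ that contains their support. Two preliminary observations drive the argument. First, $\mathcal{C}$ is closed under intersection: if $B_1=A\setminus C_1$ and $B_2=A\setminus C_2$ with $C_1,C_2$ unions of directed cuts, then
$$B_1\cap B_2 \;=\; A\setminus (C_1\cup C_2),$$
and $C_1\cup C_2$ is again a union of directed cuts; moreover $A\in\mathcal{C}$ (empty union). Hence for every $G$-flow $f$ on $D$ the set
$$\phi(f) \;:=\; \bigcap\,\{\,B\in\mathcal{C}\,:\,\supp(f)\subseteq B\,\}$$
is the unique smallest element of $\mathcal{C}$ containing $\supp(f)$.

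Next I would identify the NL-flows inside $\mathcal{C}$. By Proposition \ref{supp dijoin}, $f$ is an NL-flow iff $\supp(f)$ is a dijoin, that is, iff $\supp(f)$ meets every directed cut. Equivalently $\supp(f)$ is \emph{not} contained in any $A\setminus C$ for a non-empty union $C$ of directed cuts, which is precisely the condition $\phi(f)=A$. Defining
$$g(B) \;:=\; \#\{f\text{ a }G\text{-flow in }D\,:\,\phi(f)=B\}\qquad (B\in\mathcal{C}),$$
we therefore have that $g(A)$ equals the number of NL-$G$-flows in $D$.

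Now I would count flows with support contained in a given $B\subseteq A$. Such flows are exactly the $G$-flows of the subdigraph $(V,B)$; after deleting one redundant row per connected component of $(V,B)$, the incidence matrix has full row rank $rk(B)$ and is totally unimodular, so Lemma \ref{crucial} (with $b=0$) gives
$$F(B)\;:=\;\#\{f\text{ a }G\text{-flow in }D\,:\,\supp(f)\subseteq B\}\;=\;|G|^{|B|-rk(B)}.$$
Partitioning these flows according to $\phi(f)$, and using that $\phi(f)\in\mathcal{C}$ with $\phi(f)\subseteq B$, yields for every $B\in\mathcal{C}$
$$F(B)\;=\;\sum_{\substack{B'\in\mathcal{C}\\ B'\subseteq B}} g(B')\;=\;\sum_{B'\geq B}g(B'),$$
where the last sum is taken in the poset $(\mathcal{C},\supseteq)$.

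Finally, Proposition \ref{möbius inversion} (M\"obius inversion from above) inverts this identity: evaluating at the minimum $x=A$ of $(\mathcal{C},\supseteq)$,
$$g(A)\;=\;\sum_{B\in\mathcal{C}}\mu(A,B)\,F(B)\;=\;\sum_{B\in\mathcal{C}}\mu(A,B)\,|G|^{|B|-rk(B)}\;=\;\phi_{NL}^{D}(|G|).$$
Since the left-hand side is the number of NL-$G$-flows and the right-hand side depends on $G$ only through $k=|G|$, the theorem follows. The only point that requires genuine care is the closure of $\mathcal{C}$ under intersection, which legitimizes the definition of $\phi(f)$ and hence the partition underlying the M\"obius inversion; everything else reduces to Lemma \ref{crucial} and the characterisation of NL-flows via dijoins.
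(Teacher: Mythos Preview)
Your proof is correct and follows the same strategy as the paper: M\"obius inversion on $(\mathcal{C},\supseteq)$ together with Lemma~\ref{crucial} to count the $G$-flows supported in each $B\in\mathcal{C}$. Your packaging via the closure operator $\phi$ (exploiting that $\mathcal{C}$ is intersection-closed) is a clean variant of the paper's argument, which instead takes $g_k(B)$ to be the number of NL-$G$-flows in the subdigraph $D[B]$ and verifies the summation identity by explicitly constructing, for each flow on $D[Z]$, the element $Y\in\mathcal{C}$ on which its restriction becomes an NL-flow.
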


\begin{proof}
Using Proposition \ref{möbius inversion} with 
$f_k,g_k: 2^A \rightarrow \Z$, such that $f_k(B)$ indicates all $G$-flows
and $g_k(B)$ all NL-$G$-flows in the subgraph of $D$ induced by $B \subseteq A$,
it suffices to show that
\begin{flalign}\label{möbiusvor}
f_k(Z)=\sum_{Y \subseteq Z}g_k(Y)
\end{flalign}
holds for all $Z \in \mathcal{C}$. Then we obtain
\begin{flalign*}
\phi_{NL}^D(k)=g_k(A)&=\sum_{B \subseteq A} \mu(A,B)f_k(B)\\
&=\sum_{B \in \mathcal{C}}\mu(A,B)k^{\vert B \vert - rk(B)},
\end{flalign*}
since the number of $G$-flows on $D[B]$ is given by $k^{\vert B \vert - rk(B)}$ due to Lemma \ref{crucial}.\\
Concerning (\ref{möbiusvor}) let $Z \in \mathcal{C}$ and $\varphi$ be a $G$-flow on $D[Z]$. With $d$ we denote the number of dicuts in $D[Z]$ and set
$$Y := Z \setminus \bigcup_{i=1}^d \left\lbrace C_i \; \vert \; C_i \text{ is a directed cut in }D[Z]\text{ and } \exists z \in Z \text{ s.t. } \varphi|_{C_i}(z)=0 \right\rbrace.$$
Then clearly $Y \in \mathcal{C}$ and $\varphi|_{Y}$ is an NL-$G$-flow on $D[Y]$. So, \mbox{$f_k(Z)\leq \sum_{Y \subseteq Z}g_k(Y)$}.\\
The other direction is obvious since every NL-$G$-flow $g$ on $D[Y]$ with $Y \subseteq Z$ can be extended to a $G$-flow $\tilde{g}$ on $D[Z]$, setting $\tilde{g}(a):= 0_G$ for all $a \in Z-Y$.
\end{proof}

Considering duality, our NL-flow polynomial becomes the NL-coflow polynomial of $D$ which equals the chromatic polynomial in \cite{HARUTYUNYAN}, \cite{altenbokum}, counting all acyclic colorings in $D$ devided by $x$.

\section{Regular Oriented Matroids and the Equivalence Theorem}\label{equivtheo}

The equivalence theorem for nowhere-zero flows has been generalized to regular oriented matroids by Crapo \cite{crapo_tutte_1969} and Arrowsmith and Jaeger \cite{arrowsmith}. Like them, we can generalize our results of the previous sections to oriented regular matroids, obtain an analogue Equivalence Theorem and present a polynomial counting all integer NL-$k$-flows which differs from the one in Definition \ref{defflow}. \\
One of our main tools will be the following variant of Farkas' Lemma (see 3.4.4 (4P) in \cite{BLSW}):
\begin{theorem}\label{farkas}
Let $\mathcal{O}$ denote an oriented matroid on a finite set $E$ given by its set of covectors and $\mathcal{O}^\ast$ its dual. Let $E=P \dot\cup N \dot\cup \ast \dot\cup O$ be a partition of $E$ and $i_0 \in P$. Either there exists $X \in \mathcal{O}^\ast$ such that $i_0 \in supp(X) \subseteq P \cup N \cup \ast$, supp$(X) \cap P \subseteq X^+$ and $supp(X) \cap N \subseteq X^-$ or there exists $Y \in \mathcal{O}$ such that $i_0 \in supp(Y)\subseteq P \cup N \cup O$, supp$(Y) \cap P \subseteq Y^+$ and supp$(Y)\cap N \subseteq Y^-$, but not both.
\end{theorem}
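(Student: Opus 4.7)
The statement is the classical ``four-painting'' version of Farkas' lemma for oriented matroids (it appears as Proposition~3.4.4 in BLSW), and the plan is to prove the two directions separately: the negative direction (``not both'') is a direct consequence of the orthogonality of vectors and covectors, while the existence direction goes by induction on $|E|$ using elimination.

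For the ``not both'' part I would suppose, for contradiction, that vectors $X \in \mathcal{O}^\ast$ and $Y \in \mathcal{O}$ both exist with the stated support and sign conditions. The orthogonality axiom says that whenever two signed sets arising from a vector and a covector have intersecting supports, the products $X_e Y_e$ on the common support cannot all have the same sign. The partition, however, forces $\supp(X)\cap\supp(Y) \subseteq P\cup N$: every element of $\ast$ is excluded from $\supp(Y)$ and every element of $O$ is excluded from $\supp(X)$. On $P$ both entries are positive, on $N$ both are negative, so every $X_e Y_e$ on the intersection is strictly positive. Since $i_0\in P$ lies in both supports, the intersection is non-empty, contradicting orthogonality.

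For the existence direction I would argue by induction on $|E|$. The base case $|E|=1$ is immediate: the single element $i_0$ is either a loop of $\mathcal{O}$, giving the trivial covector $Y$, or a loop of $\mathcal{O}^\ast$, giving the trivial vector $X$. For the inductive step, pick any $e\in E\setminus\{i_0\}$ and pass to a minor: delete $e$ if $e\in\ast\cup P\cup N$ and contract $e$ if $e\in O$. Apply the induction hypothesis to the minor with the inherited partition of $E\setminus\{e\}$. This yields either a vector or a covector of the minor satisfying the sign conditions, which I would then lift back to a vector of $\mathcal{O}^\ast$ or a covector of $\mathcal{O}$ via the standard conformal composition of signed circuits or cocircuits.

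The main obstacle is exactly this lifting step: an arbitrary lift may introduce a non-zero entry on $e$ of the wrong sign, violating either the support restriction or the sign pattern on $P\cup N$. To handle this I would apply the vector/covector elimination axiom to two candidates obtained from the induction, composing them so that the entry at $e$ cancels while the prescribed signs on $P$ and $N$ are preserved and $i_0$ stays in the support. This is the same mechanism that underlies the proof of the 3-painting lemma in BLSW, and once this elimination is performed the induction closes.
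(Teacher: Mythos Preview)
The paper does not give its own proof of this theorem: it is stated with the citation ``(see 3.4.4 (4P) in \cite{BLSW})'' and used as a black box. So there is nothing in the paper to compare against; your outline is essentially the standard proof from BLSW that the paper is invoking.

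Two small corrections to your sketch. In the base case you have the roles reversed: if $i_0$ is a loop of $\mathcal{O}$ then $\{i_0\}$ supports a circuit, hence a vector $X\in\mathcal{O}^\ast$, whereas a loop of $\mathcal{O}^\ast$ (a coloop of $\mathcal{O}$) yields the covector $Y\in\mathcal{O}$. In the inductive step, the choice ``delete $e$ if $e\in\ast\cup P\cup N$, contract if $e\in O$'' is not quite how the argument runs. For $e\in\ast$ one contracts (so a covector of the minor lifts with $Y_e=0$, and a vector of the minor lifts with $X_e$ unrestricted, both acceptable), and for $e\in O$ one deletes, for the dual reason. The genuinely delicate case is $e\in P\cup N$: there one typically applies the induction hypothesis twice, once with $e$ moved to $\ast$ and once with $e$ moved to $O$, and if neither outcome directly satisfies the original constraints one combines the two resulting (co)vectors via elimination at $e$ to cancel the bad sign while preserving $i_0$ and the signs on $P\cup N$. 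You gesture at exactly this mechanism in your final paragraph, so the overall plan is sound; just be aware that the single minor you describe does not suffice for $e\in P\cup N$.
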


\begin{definition}
Let $\mathcal{O}$ denote the set of covectors of an oriented matroid on a finite set $E$. We say that $\mathcal{O}$ is totally cyclic, if the all $+$-vector is in $\mathcal{O}^\ast$, i.e. it is a vector. $S \subseteq E$ is a dijoin, if $(Y \in \mathcal{O}\setminus \lbrace 0 \rbrace$ and $Y \succeq 0)$ implies $supp(Y) \cap S \neq \emptyset$, i.e. $S$ meets every positive cocircuit.
\end{definition}

\begin{proposition}
$S \subseteq E$ is a dijoin if and only if $\mathcal{O}/S$ is totally cyclic.
\end{proposition}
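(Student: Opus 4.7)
The plan is to reformulate both sides in terms of the vectors and covectors of $\mathcal{O}$ and then apply Farkas' Lemma (Theorem \ref{farkas}) in both directions. Recall that the covectors of the contraction $\mathcal{O}/S$ are precisely the restrictions $X|_{E \setminus S}$ of those $X \in \mathcal{O}$ that vanish on $S$, while the vectors of $\mathcal{O}/S$ (i.e.\ the elements of $(\mathcal{O}/S)^\ast$) are exactly the restrictions $X|_{E \setminus S}$ of vectors $X \in \mathcal{O}^\ast$. Hence ``$\mathcal{O}/S$ is totally cyclic'' is equivalent to the existence of a vector $X \in \mathcal{O}^\ast$ with $X(e) = +$ for every $e \in E\setminus S$, while ``$S$ is a dijoin'' is equivalent to the non-existence of a nonzero $Y \in \mathcal{O}$ with $Y \succeq 0$ and $\supp(Y) \subseteq E\setminus S$.

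With this dictionary in hand, both directions will use the partition $P := E \setminus S$, $\ast := S$, $N := \emptyset$, $O := \emptyset$ in Theorem \ref{farkas}. For a choice of $i_0 \in E\setminus S$, Alternative~1 produces a vector $X \in \mathcal{O}^\ast$ that is nonnegative on $E\setminus S$ and strictly positive at $i_0$, whereas Alternative~2 produces a covector $Y \in \mathcal{O}$ with $\supp(Y) \subseteq E\setminus S$, $Y \succeq 0$, and $i_0 \in \supp(Y)$.

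For ``$\mathcal{O}/S$ totally cyclic $\Rightarrow$ $S$ dijoin'' I would proceed contrapositively: if a nonzero positive covector $Y \in \mathcal{O}$ with $\supp(Y) \subseteq E\setminus S$ existed, then choosing any $i_0 \in \supp(Y)$ would put us in Alternative~2, so by the dichotomy of Theorem \ref{farkas} Alternative~1 fails, ruling out any vector that is strictly positive on all of $E\setminus S$ and hence ruling out total cyclicity of $\mathcal{O}/S$.

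For ``$S$ dijoin $\Rightarrow$ $\mathcal{O}/S$ totally cyclic'', the dijoin hypothesis rules out Alternative~2 for every $i_0 \in E\setminus S$; hence Alternative~1 yields a family of vectors $X_{i_0} \in \mathcal{O}^\ast$, each nonnegative on $E\setminus S$ and with $X_{i_0}(i_0)=+$. The final step is to amalgamate them into a single vector that is strictly positive throughout $E\setminus S$ via the sign-vector composition $X := X_{i_1} \circ X_{i_2} \circ \dots \circ X_{i_r}$ enumerating $E\setminus S = \{i_1,\dots,i_r\}$; this stays in $\mathcal{O}^\ast$ by the composition axiom for vectors of an oriented matroid, and since no $X_{i_j}$ takes a negative value on $E\setminus S$ while each coordinate $i_j$ is made positive by $X_{i_j}$, the resulting $X$ satisfies $X(e)=+$ for all $e \in E\setminus S$. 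I expect this composition step to be the main point to keep honest; everything else is a direct reading of Farkas' Lemma under the dictionary above.
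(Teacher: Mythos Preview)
Your proposal is correct and follows essentially the same route as the paper: the same partition $P=E\setminus S$, $\ast=S$, $N=O=\emptyset$ is fed into Theorem~\ref{farkas}, the dijoin hypothesis rules out the covector alternative for each $i_0\in E\setminus S$, and the resulting vectors $X_{i_0}$ are composed to obtain the all-$+$ vector witnessing total cyclicity of $\mathcal{O}/S$. The paper dispatches the converse in one line (``an immediate consequence from Theorem~\ref{farkas}''), which is exactly your contrapositive argument spelled out.
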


\begin{proof}
Set $P=E \setminus S$, $\ast=S$ and $N=O=\emptyset$. Since $S$ is a dijoin, there is no non-zero vector $Y \in \mathcal{O}$ such that supp$(Y) \subseteq P$ and \mbox{supp$(Y) \cap P=Y^+$}. Thus, by Theorem \ref{farkas} for every $e \in P$ there exists $X_e \in \mathcal{O}^\ast$ such that \mbox{$e \in $ supp$(X_e)$} and supp$(X_e)\cap P \subseteq X_e^+$. The composition of these vectors $X_{e_1}\circ ... \circ X_{e_r} \in \mathcal{O}^\ast$ is the all $+$-vector in $\mathcal{O}^\ast \setminus S$, which proves that $\mathcal{O}/S$ is totally cyclic. The other implication is an immediate consequence from Theorem \ref{farkas}.
\end{proof}

Now we can define NL-flows in the setting of oriented regular matroids:

\begin{definition}
Let $M \in \lbrace 0,\pm 1 \rbrace^{m \times n}$ be a totally unimodular matrix and let $\mathcal{O}$ be the corresponding regular matroid. An NL-flow in $\mathcal{O}$ is a vector $x \in \mathcal{O}$ such that $\mathcal{O} /$supp$(x)$ is totally cyclic.\\
For $k \geq 2$ an NL-$k$-flow is an NL-flow in $\mathcal{O}$ with
$$ x \in \lbrace 0, \pm 1,...,\pm (k-1) \rbrace^n.$$
If $G$ is an Abelian group of order $k$, then an NL-$G$-flow is an NL-flow with
$$ x \in G^n.$$
\end{definition}

It is clear that with this definition we can define the NL-flow polynomial as before and immediately obtain the equivalence of the first two statements in Theorem \ref{equiv.theorem} by Theorem \ref{NLflowTheo}. The crucial Lemma \ref{crucial} for the proof of Theorem \ref{NLflowTheo} dealt with totally unimodular matrices anyway.\\
The only implication we are left to verify for an equivalence theorem for NL-flows in regular oriented matroids is (i) implies (iii), since (iii) implies (i) is trivial by taking the integer flow mod $k$. The following Lemma suffices for that purpose. It could be deduced from Proposition 5 in \cite{arrowsmith}. We give a short proof for completeness.

\begin{lemma}
Let $M \in \lbrace 0, \pm 1 \rbrace^{m \times n}$ be a totally unimodular matrix and let $x$ denote a $\mathbb{Z}_k$-flow in the corresponding regular matroid $\mathcal{O}$, i.e.\ $Mx \equiv 0 \mod k$. Then there exists a $k$-flow $y \in \lbrace 0, \pm 1,...,\pm(k-1)\rbrace^n$ in $\mathcal{O}$ such that $y \equiv x \mod k$.
\end{lemma}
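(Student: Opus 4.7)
The plan is to lift the $\mathbb{Z}_k$-flow $x$ to a canonical integer representative and then obtain the desired $k$-flow $y$ by subtracting off a suitable $\{0,1\}$-vector. The key tool I would invoke is the Hoffman--Kruskal theorem: a polyhedron defined by a totally unimodular constraint matrix and an integer right-hand side has only integer vertices.

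First I would fix the representative $x' \in \{0,1,\ldots,k-1\}^n$ with $x' \equiv x \pmod{k}$, and use $Mx \equiv 0 \pmod k$ to write $Mx' = kz$ for some $z \in \mathbb{Z}^m$. Let $S = \supp(x')$. The ansatz is $y = x' - kw$ for an integer vector $w$: then $My = 0$ is equivalent to $Mw = z$, while $y \equiv x \pmod k$ is automatic. To force $y \in \{0,\pm 1,\ldots,\pm(k-1)\}^n$, it suffices to find $w \in \{0,1\}^n$ with $\supp(w) \subseteq S$: for $i \in S$ we have $x'_i \in \{1,\ldots,k-1\}$, so $y_i \in \{x'_i,\, x'_i - k\} \subseteq \{-(k-1),\ldots,k-1\}$; and for $i \notin S$ both $x'_i$ and $w_i$ vanish, giving $y_i=0$.

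Next I would introduce the polyhedron
$$P \;=\; \{\, w \in \mathbb{R}^n : Mw = z,\ 0 \leq w \leq \chi_S \,\},$$
where $\chi_S \in \{0,1\}^n$ is the indicator vector of $S$. The rational point $w = x'/k$ lies in $P$, since $0 < x'_i/k \leq (k-1)/k < 1$ for $i \in S$ and $x'_i/k = 0$ otherwise, so $P$ is non-empty. Its constraint matrix is obtained from $M$, $I$, and $-I$ by stacking, hence inherits total unimodularity from $M$. Combined with integrality of all right-hand sides, the Hoffman--Kruskal theorem guarantees that every vertex of $P$ is integer, so in particular $P$ contains a vector $w^{\ast} \in \{0,1\}^n$ with $\supp(w^{\ast}) \subseteq S$.

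Finally, setting $y := x' - kw^{\ast}$ completes the proof: $My = Mx' - kMw^{\ast} = kz - kz = 0$, so $y$ is an integer flow; $y \equiv x' \equiv x \pmod k$; and the coordinatewise analysis above places $y$ in $\{0,\pm 1,\ldots,\pm(k-1)\}^n$. The only conceptually non-obvious step, and thus the main obstacle, is recognising the ansatz $y = x' - kw$ and the accompanying polyhedron $P$, which translates the problem into integer feasibility of a system with a TU constraint matrix; after that, the rest is bookkeeping plus one standard polyhedral theorem.
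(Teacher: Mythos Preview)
Your argument is correct and takes a genuinely different route from the paper's. The paper argues by extremality: it picks any lift $y \equiv x \pmod k$ in $\{0,\pm1,\dots,\pm(k-1)\}^n$ minimising $\|My\|_1$, assumes $My\neq 0$, and then invokes the oriented-matroid Farkas lemma (their Theorem~\ref{farkas}) on the augmented matrix $(M,I_P,I_N)$ to produce a $\{0,\pm1\}$-correction $\tilde y$ with $y+k\tilde y$ still in range but with strictly smaller $\|M(\cdot)\|_1$, a contradiction. Your approach instead fixes the canonical lift $x'\in\{0,\dots,k-1\}^n$ and reduces everything in one shot to integer feasibility of $\{w:Mw=z,\ 0\le w\le \chi_S\}$, which Hoffman--Kruskal settles directly since the box constraints make the polytope bounded and hence guarantee a vertex. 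The paper's proof stays closer to the oriented-matroid framework it has already set up (Farkas), while yours is more self-contained and slightly shorter; as a bonus, your construction visibly yields $\supp(y)=\{i:x_i\not\equiv 0\pmod k\}$, which is exactly what the subsequent NL-flow application needs. One cosmetic point: rather than asserting that the stacked matrix $(M;I;-I)$ is TU, it is cleaner to cite the two-sided form of Hoffman--Kruskal (polyhedra $\{w:a\le Mw\le b,\ c\le w\le d\}$ are integral for TU $M$), which covers your $P$ verbatim.
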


\begin{proof}
Choose $y \in \lbrace 0, \pm 1,...,\pm(k-1)\rbrace^n$ satisfying $y \equiv x \mod k$ that minimizes $\| My \|_1=\sum_{i=1}^m \vert (My)_i \vert$. We claim that $y$ must be as required. Assume not and set $Y^+:= \lbrace i \; \vert \; y > 0\rbrace, Y^0:=\lbrace i \; \vert \; y =0 \rbrace$, $Y^-:=\lbrace i \; \vert \; y < 0 \rbrace$, \mbox{$P:=\lbrace i \; \vert \; (My)_i > 0 \rbrace$} and $N:=\lbrace i \; \vert \; (My)_i < 0 \rbrace$. By assumption $P \cup N \neq \emptyset$. Without loss of generality we only consider the case $P \neq \emptyset$, switching signs yields the other case. Hence let $i_0 \in P$. There cannot exist $u \in \R^m$ such that $u_P \geq 0, u_{i_0}>0, u_N \leq 0, u^\top M_{Y^+} \leq 0$ and $u^\top M_{Y^-} \geq 0$ for the first three inequalities imply $u^\top My > 0$ and the last two $u^\top M y \leq 0$. Hence by Theorem \ref{farkas}, applied to the pair of oriented matroids defined by the kernel and the row space of the totally unimodular matrix $\tilde{M}:=(M,I_P,I_N)$, there exists $(\tilde{y}^\top, z_P^\top, z_N^\top)^\top$ with $\tilde{y}_{Y^+} \leq 0$, $\tilde{y}_{Y^-} \geq 0$, $\tilde{y}_{Y^0}=0$, $z_P \geq 0$, $z_{i_0}>0$, $z_N \leq 0$ such that $M \tilde{y}=-I_Pz_P-I_Nz_N.$ Since $M$ is totally unimodular we may assume that $\tilde{y},z_P$ and $z_N$ have entries in $\lbrace 0,1,-1 \rbrace$ only. Thus, $y+k \tilde{y} \in \lbrace 0,\pm1,...,\pm(k-1)\rbrace^n$ and $y+k\tilde{y} \equiv x \mod k$. But since $My$ is divisible by $k$ we have
$$ \|M(y+k \tilde{y})\|_1 = \|My-kI_Pz_P-kI_Nz_N\|_1 < \|My\|_1,$$
contradicting the choice of $y$. 
\end{proof}

By the discussion preceding the last Lemma this implies the Equivalence Theorem for NL-flows:
\begin{theorem}
Let $\mathcal{O}$ be an oriented matroid given by a totally unimodular matrix $M$. Let $k \geq 2$ and $G$ be an Abelian group of order $k$. Then the following conditions are equivalent:
\begin{itemize}
\item[(i)] There exists an NL-$\mathbb{Z}_k$-flow in $\mathcal{O}$.
\item[(ii)] There exists an NL-$G$-flow in $\mathcal{O}$.
\item[(iii)] There exists an NL-$k$-flow in $\mathcal{O}$. \hfill $\square$
\end{itemize}
\end{theorem}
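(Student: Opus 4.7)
The plan is to assemble three pieces, two of which are essentially already in hand.

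First, for (i) $\Leftrightarrow$ (ii), I would invoke the matroidal analogue of Theorem~\ref{NLflowTheo}. The proof of that theorem used only Lemma~\ref{crucial}, a counting statement about solutions of $Mx=b$ for totally unimodular $M$, together with the characterisation of NL-flows as flows whose support meets every positive cocircuit. Both ingredients transfer verbatim to the regular oriented matroid setting, so the same M\"obius-inversion argument produces a polynomial $\phi_{NL}^{\Oscr}(x)\in\Z[x]$ whose value at $k$ simultaneously counts the NL-$\Z_k$-flows and the NL-$G$-flows in $\Oscr$ for any abelian group $G$ of order $k$. Hence either both counts are positive or both vanish, which is exactly (i) $\Leftrightarrow$ (ii).

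Next, for (iii) $\Rightarrow$ (i), I would simply reduce modulo $k$: given an NL-$k$-flow $y\in\{0,\pm1,\ldots,\pm(k-1)\}^n$ in $\Oscr$, set $x:=y\bmod k\in\Z_k^n$. Then $Mx\equiv My=0\pmod k$, so $x$ is a $\Z_k$-flow. The key observation is that every nonzero entry of $y$ has absolute value strictly less than $k$, so its residue mod $k$ remains nonzero; hence $\supp(x)=\supp(y)$, and $\Oscr/\supp(x)=\Oscr/\supp(y)$ is totally cyclic, so $x$ is an NL-$\Z_k$-flow.

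Finally, (i) $\Rightarrow$ (iii) is precisely what the Lemma immediately preceding the theorem was designed to supply: from any $\Z_k$-flow $x$ it yields a lift $y\in\{0,\pm1,\ldots,\pm(k-1)\}^n$ with $My=0$ and $y\equiv x\pmod k$. The same support observation as in the previous paragraph shows $\supp(y)=\supp(x)$, so $y$ inherits the totally-cyclic property and is an NL-$k$-flow. The genuinely difficult step, the Farkas-based minimisation of $\|My\|_1$, has thus already been absorbed into that Lemma; what remains for the Equivalence Theorem itself is bookkeeping. The recurring point, and the only place where one could slip, is verifying that the support of the flow is preserved under reduction or lifting modulo $k$, which holds precisely because the bound $|y_i|<k$ forbids any accidental cancellation of the support when passing between $\Z$ and $\Z_k$.
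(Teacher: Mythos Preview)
Your proposal is correct and follows exactly the paper's route: (i)\,$\Leftrightarrow$\,(ii) via the NL-flow polynomial argument (Theorem~\ref{NLflowTheo} and Lemma~\ref{crucial}), (iii)\,$\Rightarrow$\,(i) by reduction mod $k$, and (i)\,$\Rightarrow$\,(iii) by the lifting Lemma. Your explicit verification that $\supp(y)=\supp(x)$ under lifting and reduction is a point the paper leaves implicit but which is indeed needed to transfer the NL condition; otherwise the arguments coincide.
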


In \cite{arrowsmith} it is shown that the number of integer NZ-$k$-flows is also given by a polynomial in $k$ which differs from the flow polynomial defined by algebraic NZ-flows. In the following we will show that this theorem generalizes to NL-flows as well. The next Proposition provides the main tool concerning this intent.

\begin{proposition}[Ehrhart, see \cite{triangulations}]\label{ehrhart}
Given a convex polytope $P$, whose vertices belong to $\Z^d$, and for a positive integer $t$, let $\psi_P(t)=\# (tP \cap \Z^d)$ denote the number of integer points in the dilated polytope $tP=\lbrace tx \vert x \in P \rbrace$. Then $\psi_P(t)$ is a polynomial in $t$.
\end{proposition}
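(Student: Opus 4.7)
The plan is to reduce Ehrhart's theorem to the case of a lattice simplex via triangulation and then exploit the generating-function identity for the fundamental parallelepiped of a simplicial cone.

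First, I would triangulate $P$ into finitely many full-dimensional lattice simplices $\sigma_1,\dots,\sigma_N$ with vertices in $P \cap \Z^d$. Using the standard half-open decomposition of such a triangulation, each point of $tP \cap \Z^d$ lies in exactly one of the half-open pieces $\sigma_i^{\circ}$, so
$$\psi_P(t) = \sum_{i=1}^N \psi_{\sigma_i^{\circ}}(t),$$
and it suffices to prove that each summand is a polynomial in $t$. This reduces the problem to counting lattice points in dilates of a single (half-open) lattice simplex $\sigma$ with affinely independent vertices $v_0,\dots,v_k \in \Z^d$.

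For such a $\sigma$ I would pass to the homogenised cone
$$C(\sigma)=\{\lambda_0(v_0,1)+\dots+\lambda_k(v_k,1) : \lambda_i \geq 0\} \subseteq \R^{d+1},$$
whose slice at height $t$ is precisely $t\sigma \times \{t\}$, so that lattice points of $C(\sigma)$ at height $t$ correspond bijectively to elements of $t\sigma \cap \Z^d$. Since the generators $(v_i,1)$ are linearly independent, every lattice point of $C(\sigma)$ decomposes uniquely as a non-negative integer combination of these generators plus a lattice point of the half-open fundamental parallelepiped $\Pi$. Grouping by height yields
$$\sum_{t \geq 0} \psi_\sigma(t)\,z^t = \frac{h_\sigma(z)}{(1-z)^{k+1}},$$
where $h_\sigma(z)=\sum_{p \in \Pi \cap \Z^{d+1}} z^{p_{d+1}}$ is a polynomial of degree at most $k$. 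Expanding the right-hand side through the binomial series for $(1-z)^{-(k+1)}$ exhibits the coefficient of $z^t$ as a polynomial in $t$ of degree at most $k$, so $\psi_\sigma(t)$ is polynomial.

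Summing the polynomials $\psi_{\sigma_i^{\circ}}(t)$ over the pieces of the triangulation then shows that $\psi_P(t)$ is itself a polynomial. The main obstacle is setting up the half-open triangulation so that each lattice point of $tP$ is counted exactly once, uniformly in $t$; the cone-and-Hilbert-series step is essentially formal once this bookkeeping is in place.
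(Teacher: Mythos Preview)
The paper does not actually prove this proposition; it is quoted as a classical result of Ehrhart with a reference to \cite{triangulations} and used as a black box in the subsequent theorem. So there is no ``paper's own proof'' to compare against.

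That said, your sketch is the standard textbook argument and is essentially correct: triangulate into lattice simplices, pass to the homogenised simplicial cone in $\R^{d+1}$, decompose via the fundamental parallelepiped to obtain a rational generating function $h_\sigma(z)/(1-z)^{k+1}$, and read off polynomiality from the binomial expansion. This is precisely the approach one finds in the cited reference, so in spirit you are reproducing exactly what the paper defers to. The one point worth tightening is the half-open triangulation step: you correctly flag it as the main bookkeeping issue, and indeed one needs either a visible/invisible-face decomposition or an inclusion--exclusion over closed faces to ensure each lattice point of $tP$ is counted once, uniformly in $t$. Once that is pinned down, the rest is formal as you say.
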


\begin{theorem}
Let $M \in \lbrace 0, \pm 1 \rbrace^{m \times n}$ be a totally unimodular matrix with full row rank, $\mathcal{O}$ the corresponding regular matroid on the finite ground set $E$ and let $k \geq 2$. Then the number of NL-$k$-flows in $\mathcal{O}$ is a polynomial in $k$.
\end{theorem}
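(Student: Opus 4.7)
The strategy is to combine Ehrhart's Proposition~\ref{ehrhart} with the Möbius-inversion argument used in the proof of Theorem~\ref{NLflowTheo}, transposed to the oriented-matroid setting.

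For each $B\subseteq E$ let
$$f_k(B):=\#\{x\in\Z^n:Mx=0,\ \supp(x)\subseteq B,\ |x_i|\le k-1\ \forall i\},$$
and let $g_k(B)$ denote the number of NL-$k$-flows in the restricted regular matroid $\mathcal{O}[B]$. The goal is to express $g_k(E)$ as a finite $\Z$-linear combination of the $f_k(B)$'s and to show that each $f_k(B)$ is a polynomial in $k$.

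First (the Ehrhart step), set
$$P_B:=\{x\in\R^n:Mx=0,\ x_i=0\ \text{for}\ i\notin B,\ -1\le x_i\le 1\}.$$
Since $M$ is totally unimodular, the full constraint matrix of $P_B$ (obtained by augmenting $M$ with identity blocks corresponding to the coordinate bounds and to the equations $x_i=0$) remains totally unimodular, and all right-hand sides are integral. Hence $P_B$ is a bounded lattice polytope, and its $(k-1)$-th dilate has exactly $f_k(B)$ lattice points. Proposition~\ref{ehrhart} therefore yields $f_k(B)=\psi_{P_B}(k-1)$, a polynomial in $k$.

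Next (the Möbius-inversion step), I would transcribe the proof of Theorem~\ref{NLflowTheo} literally, replacing "directed cut of $D$" by "positive cocircuit of $\mathcal{O}$" and working with the poset
$$\mathcal{C}:=\bigl\{E\setminus C\bigm|C\text{ is a union of positive cocircuits of }\mathcal{O}\bigr\}$$
ordered by reverse inclusion. Given $Z\in\mathcal{C}$ and an integer $k$-flow $\varphi$ on $\mathcal{O}[Z]$, successively removing from $Z$ each positive cocircuit of $\mathcal{O}[Z]$ that carries a zero coordinate of $\varphi$ produces some $Y\in\mathcal{C}$ with $Y\subseteq Z$ on which $\varphi|_Y$ is an NL-$k$-flow; this gives $f_k(Z)=\sum_{Y\subseteq Z}g_k(Y)$ for every $Z\in\mathcal{C}$. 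Applying Proposition~\ref{möbius inversion} then produces
$$g_k(E)=\sum_{B\in\mathcal{C}}\mu(E,B)\,f_k(B),$$
a finite integer combination of polynomials in $k$, and hence itself a polynomial in $k$.

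The only genuinely non-combinatorial ingredient is the verification that $P_B$ is a lattice polytope; this is the standard consequence of total unimodularity. Everything else is a mechanical re-reading of Theorem~\ref{NLflowTheo} in the oriented-matroid language, with the term $k^{|B|-\operatorname{rk}(B)}$ (which counted $G$-flows via Lemma~\ref{crucial}) simply replaced by the Ehrhart polynomial $\psi_{P_B}(k-1)$ of $P_B$.
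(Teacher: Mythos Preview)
Your proposal is correct and follows essentially the same approach as the paper: both combine the M\"obius inversion from Theorem~\ref{NLflowTheo} with Ehrhart's theorem applied to the polytope of integer $k$-flows on each $B\in\mathcal{C}$. The only cosmetic difference is that the paper first eliminates basic variables via a basis of the submatrix $\mathcal{F}$ to obtain a full-dimensional polytope $P(F)\subseteq\R^{q-p}$, whereas you work directly with the (lower-dimensional) polytope $P_B\subseteq\R^n$; both polytopes have integer vertices by total unimodularity, so either setup yields the conclusion.
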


\begin{proof}
We pursue the same strategy as in the proof of Theorem \ref{NLflowTheo} and set \mbox{$f_k,g_k:2^E \longrightarrow \mathbb{Z}$} such that $f_k(F)$ counts all $k$-flows in $\mathcal{O}(F)$ and $g_k(F)$ all NL-$k$-flows respectively. As proven in Theorem \ref{NLflowTheo} $f_k(Z)= \sum_{Y \subseteq Z}g_k(Y)$ holds for all $Z \in \mathcal{C}$. With Proposition \ref{möbius inversion} in mind it now suffices to prove that $f_k(F)$ is a polynomial in $k$ for all $F \subseteq E$.\\
For $p\leq m, q \leq n$ let $\mathcal{F} \in \lbrace 0, \pm 1 \rbrace^{p \times q}$ denote the totally unimodular submatrix of $M$ with full row rank corresponding to $\mathcal{O}(F)$. 
Due to (\ref{Mf=0}) we are looking for the number of solutions of $\mathcal{F}x=0$, where $x \in \mathbb{Z}^{q}$ and 
$$-(k-1) \leq x_i \leq k-1$$ 
holds for all $1\leq i \leq q$. Without loss of generality let the first $p$ columns of $\mathcal{F}$ build a basis of $\R^{p}$, denoted by $\mathcal{F}_B$ and denote the other $q-p$ columns with $\mathcal{F}_N$. Analoguesly denote the first $p$ entries of $x$ with $x_B$, the others with $x_N$. Then

\begin{center}
\begin{tabular}{cccccc}
                  & $\mathcal{F}x$   & $=$ & $\mathcal{F}_Bx_B+\mathcal{F}_Nx_N$         & $=$ &$0$ \\
$\Leftrightarrow$ &        &     & $x_B + \mathcal{F}_B^{-1}\mathcal{F}_Nx_N$  & $=$ &$0$ \\
$\Leftrightarrow$ & $x_B$  & $=$ & $-\mathcal{F}_B^{-1}\mathcal{F}_Nx_N$       & $=:$&$- \tilde{\mathcal{F}}x_N,$\\
\end{tabular}
\end{center}

where $\tilde{\mathcal{F}} \in \lbrace 0, \pm 1 \rbrace^{p \times q-p}$ is totally unimodular.\\
Thus we are looking for the number of solutions $x_N \in \mathbb{Z}^{q-p}$ with
\begin{center}
\begin{tabular}{cccccr}
$-(k-1)$ & $\leq$ & $x_i$ & $\leq$ & $k-1$ & $\forall i \in N,$ \\
$-(k-1)$ & $\leq$ & $(-\tilde{\mathcal{F}}x_N)_i$ & $\leq$ & $k-1$ & $\forall i \in B,$\\
\end{tabular}
\end{center}
i.e.\ the number of the vertices of the following convex polytope
\begin{flalign*} 
Q(F):= \left\lbrace  x_N \in \Z^{q-p} \; \vert \; \begin{pmatrix}
I_N \\ -I_N \\ \tilde{\mathcal{F}} \\ - \tilde{\mathcal{F}}
\end{pmatrix} x_N \leq \begin{pmatrix}
k-1 \\ \vdots \\ k-1
\end{pmatrix} \in \mathbb{Z}^{2q} \right\rbrace =(k-1) P(F), 
\end{flalign*}
where
$$P(F):= \left\lbrace  \frac{x_N}{k-1} \in \Z^{q-p} \; \vert \; \begin{pmatrix}
I_N \\ -I_N \\ \tilde{\mathcal{F}} \\ - \tilde{\mathcal{F}}
\end{pmatrix} \frac{x_N}{k-1} \leq \begin{pmatrix}
1 \\ \vdots \\ 1
\end{pmatrix} \in \mathbb{Z}^{2q} \right\rbrace.$$

Since $M$ is totally unimodular, $\mathcal{F}$ is totally unimodular, as well. This implies that all vertices of $P(F)$ are integer and Proposition \ref{ehrhart} yields that the number of integer points in $Q(F)$ is given by a polynomial in $k$, namely $\psi_{P(F)}(k-1)$. Thus, the number of NL-$k$-flows 
$$g_k(E)=\sum_{F \subseteq E} \mu(E,F)f_k(F)=\sum_{F \subseteq E} \mu(E,F)\psi_{P(F)}(k-1)$$ 
is a polynomial in $k$, too.
\end{proof}

\section{Applications on orientations of tournaments}

\subsection{Complete acyclic digraphs}
As an application we examine complete acyclic digraphs $D=(V,A)$. Recall that all acyclic digraphs with $n\geq 1$ vertices are isomorphic, thus the NL-flow polynomial does not depend on the orientation of the given digraph. \\
Moreover acyclic digraphs allow a topological ordering (see \cite{bangjensen}), which is an ordering of the vertices $v_1,...,v_n$ of $D$ such that for every arc $(v_i,v_j)\in A$ we have $i < j$.\\
In the complete case this ordering is even unique since complete acyclic digraphs contain a hamiltonian path:

\begin{lemma}[see e.g. \cite{bangjensen}]\label{top.ordering}
Every complete acyclic digraph allows a unique topological ordering.\\
\end{lemma}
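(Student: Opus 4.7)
The plan is to pin down the position of each vertex in any topological ordering by reading it off from the out-degree. Since $D$ is acyclic it contains no directed $2$-cycle, and hence no antiparallel pair of arcs; combined with completeness this forces exactly one arc between any two distinct vertices, so $D$ is an acyclic tournament. (If ``complete'' in this section were read to allow parallel arcs in the same direction, those extra arcs would not influence a topological ordering of the vertex set, so I would simplify immediately to the underlying simple tournament.)

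Next I would fix an arbitrary topological ordering $v_1,\ldots,v_n$ and count out-neighbors of $v_i$. By the topological property, every arc leaving $v_i$ goes to some $v_j$ with $j>i$; by completeness, for each such $j$ there is exactly one arc between $v_i$ and $v_j$, and since $i<j$ this arc is oriented from $v_i$ to $v_j$. Hence $v_i$ has out-degree exactly $n-i$, so the out-degree sequence of $D$ is $0,1,\ldots,n-1$, all distinct.

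Since the out-degree of each vertex is an intrinsic feature of $D$ and does not depend on the ordering one started with, the position of each vertex in any topological ordering is forced: the unique vertex of out-degree $n-i$ must sit in position $i$. Existence of a topological ordering holds for every acyclic digraph by the standard iterative source-extraction argument, so only uniqueness needs new content, and the computation above provides it.

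There is no real obstacle here; the only point requiring a moment of care is the very first step---checking that the hypotheses ``complete'' and ``acyclic'' together already force the acyclic tournament structure, after which the out-degree argument is essentially immediate.
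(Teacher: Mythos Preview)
Your argument is correct. The paper does not actually give a proof; it cites a textbook and offers only the one-line justification ``complete acyclic digraphs contain a hamiltonian path.'' That observation forces uniqueness because any topological ordering must list the vertices of the Hamiltonian path in order, and there are $n$ of them. Your route is genuinely different: instead of invoking the Hamiltonian path you compute that the vertex in position $i$ of any topological ordering has out-degree exactly $n-i$, so the out-degree---an intrinsic quantity---pins down each position. Both arguments are short and elementary; the Hamiltonian-path remark is perhaps the more conceptual one-liner, while your out-degree count is entirely self-contained and does not rely on knowing that tournaments have Hamiltonian paths. Your preliminary reduction to a simple acyclic tournament is also handled cleanly.
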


\begin{minipage}{\linewidth}

\begin{center}
\begin{tabular}{ccc}
\begin{tikzpicture}
  [scale=.4,auto=left]
  
  \node (n1) at (6.5,11) [circle,fill=black] {};
  \fill [black] (n1) circle (0cm) node [right] {$\;\; a$};
  \node (n2) at (4,7) [circle,fill=black] {};
  \fill [black] (n2) circle (0cm) node [left] {$b \;\; $};
  \node (n3) at (9,7) [circle,fill=black] {};
  \fill [black] (n3) circle (0cm) node [right] {$\;\; c$};
  \node (n4) at (4,3) [circle,fill=black] {};
  \fill [black] (n4) circle (0cm) node [left] {$e \;\; $};
  \node (n5) at (9,3) [circle,fill=black] {}; 
  \fill [black] (n5) circle (0cm) node [right] {$\;\; d$};
  
  \foreach \from/\to in {n1/n2,n1/n3,n1/n4,n1/n5,n2/n4,n3/n4,n5/n4,n2/n3,n3/n5,n2/n5}
  \path [->] (\from) edge node[right] {} (\to); 
  
\end{tikzpicture}&

$\;\; \longrightarrow \;\;$&

\begin{tikzpicture}
  [scale=.4,auto=left]
  
  \node (n1) at (1,1) [circle,fill=black] {};
  \fill [black] (n1) circle (0cm) node [right] {$\;\; a$};
  \node (n2) at (1,3) [circle,fill=black] {};
  \fill [black] (n2) circle (0cm) node [right] {$\;\; b$};
  \node (n3) at (1,5) [circle,fill=black] {}; 
  \fill [black] (n3) circle (0cm) node [right] {$\;\; c$};
  \node (n4) at (1,7) [circle,fill=black] {};
  \fill [black] (n4) circle (0cm) node [right] {$\;\; d$};
  \node (n5) at (1,9) [circle,fill=black] {}; 
  \fill [black] (n5) circle (0cm) node [right] {$\;\; e$};
  
  \foreach \from/\to in {n1/n2,n2/n3,n3/n4,n4/n5}
  \path [->] (\from) edge node[right] {$\curlyvee$} (\to);

\end{tikzpicture}

\end{tabular}
\end{center}

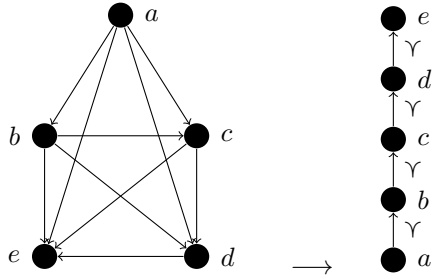
\captionof{figure}{topological ordering of $K_5$} \label{fig:M1}
\end{minipage}
\\
\\

Now, recall that a complete acyclic digraph with $n\geq 1$ vertices has exactly $n-1$ dicuts, in the following denoted by $C_1,...,C_{n-1}$. As a result the in Section \ref{section NLpoly} defined poset $(\mathcal{C},\supseteq)$ admits a simple structure.

\begin{proposition}
Let $D=(V,A)$ be a complete acyclic digraph with $\vert V \vert=n\geq 2$ and $(\mathcal{C},\supseteq)$ as above. Then $\mathcal{C}$ is isomorphic to $2^{[n-1]}$. 
\end{proposition}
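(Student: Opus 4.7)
The plan is to pin down all dicuts of $D$ explicitly via the unique topological ordering, and then show that they form an independent family, so that unions of dicuts are in one-to-one order-reversing correspondence with subsets of $[n-1]$.

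First I would use Lemma \ref{top.ordering} to fix the topological ordering $v_1,\ldots,v_n$ and observe that a cut $(S,V\setminus S)$ of $D$ is directed if and only if $S$ is an initial segment $\{v_1,\ldots,v_i\}$ for some $i\in[n-1]$: if $S$ were not an initial segment, one could find indices $a<b$ with $v_a\notin S$ and $v_b\in S$, but then the arc $(v_a,v_b)$ would cross the cut in the opposite direction to some arc $(v_c,v_d)$ with $c\leq i<d$, contradicting the cut being directed. Consequently the dicuts of $D$ are exactly
$$C_i=\{(v_a,v_b)\in A\mid a\leq i<b\},\quad i=1,\ldots,n-1.$$

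Next I would define the candidate order isomorphism $\Phi:(2^{[n-1]},\subseteq)\longrightarrow(\mathcal{C},\supseteq)$ by
$$\Phi(I):=A\setminus \bigcup_{i\in I}C_i,$$
with the convention $\Phi(\emptyset)=A$. Surjectivity is immediate from the definition of $\mathcal{C}$, since every element of $\mathcal{C}$ is the complement of a union of dicuts, and the only dicuts are the $C_i$.

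The only real point to verify is injectivity, and for this I would exhibit, for each $i\in[n-1]$, the ``private'' arc $(v_i,v_{i+1})$: it lies in $C_j$ iff $j\leq i<j+1$, i.e.\ iff $j=i$. Hence $C_I=C_J$ forces $I=J$, so $\Phi$ is a bijection. Finally, for $I,J\subseteq[n-1]$ one has $I\subseteq J\Leftrightarrow\bigcup_{i\in I}C_i\subseteq\bigcup_{j\in J}C_j\Leftrightarrow\Phi(I)\supseteq\Phi(J)$, which after composing with the obvious anti-isomorphism $I\mapsto[n-1]\setminus I$ on the boolean lattice exhibits $(\mathcal{C},\supseteq)\cong 2^{[n-1]}$. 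The main (and only mildly non-trivial) step is the enumeration of all dicuts and the independence witnessed by the arcs $(v_i,v_{i+1})$; everything else is bookkeeping.
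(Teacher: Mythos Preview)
Your proof is correct and follows the same overall plan as the paper: define the bijection between subsets $J\subseteq[n-1]$ and elements $A\setminus C_J$ of $\mathcal{C}$, and verify it is an order isomorphism. The only substantive difference is in the injectivity step. You use the Hamiltonian-path arcs $(v_i,v_{i+1})$ as witnesses, each lying in exactly one dicut $C_i$; the paper instead argues by counting connected components, observing that deleting $s$ distinct dicuts of the transitive tournament always produces exactly $s+1$ components, so no dicut can be covered by a union of the others. Your witness-arc argument is a bit more direct and also makes the classification of the $n-1$ dicuts explicit, which the paper simply quotes. One small remark: once you have established $I\subseteq J\Leftrightarrow\Phi(I)\supseteq\Phi(J)$, the map $\Phi$ is already an order isomorphism from $(2^{[n-1]},\subseteq)$ to $(\mathcal{C},\supseteq)$, so the final composition with complementation is superfluous.
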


\begin{proof}
Denote for some set $J$ of indices $C_J:= \cup_{j \in J} C_j$. Thus the elements of $\mathcal{C}$ are $A\setminus C_J$, for $J \subseteq [n-1]$ and the following map
\begin{flalign*}
\varphi : \mathcal{C} \rightarrow 2^{[n-1]} ,\; \varphi(A \setminus C_J):= J
\end{flalign*}
is well-defined since there are exactly $n-1$ dicuts. Moreover each set of indices $J \in 2^{[n-1]}$ induces exactly one element in $\mathcal{C}$, hence $\varphi$ is bijective. \\
Now, let $(A \setminus C_J )\supseteq (A \setminus C_I)$ for some $I,J \subseteq [n-1]$, thus $C_J \subseteq C_I$ and let $j \in J$. \\
Assume $j \notin I$. Then clearly $C_j \neq C_i$, for all $i \in I$, but $a \in C_I$ for each $a \in C_j$. Thus there are $i_1,...,i_s \in I$ all different from $j$ with $C_j \subseteq \bigcup_{k=1}^{s}C_{i_k}$. Hence 
\begin{flalign*} 
\# \text{comp}(D)+s &= \# \text{comp}(D \setminus (C_{i_1} \cup ... \cup C_{i_s})) \\
&= \# \text{comp}(D \setminus (C_{i_1} \cup ... \cup C_{i_s} \cup C_j))=\#\text{comp}(D)+s+1,
\end{flalign*}
where $\#$comp$(\cdot)$ counts the connected components. As a result of this contradiction we have $j \in I$ and $\varphi$ is an order isomorphism.
\end{proof}

As a result we get
\begin{flalign}\label{dicuts}
\phi_{NL}^D(k) &= \sum_{J \in \mathcal{C}}\mu (A,J)k^{\vert J \vert -rk(J)} \nonumber \\
&=\sum_{J \in 2^{[n-1]}} (-1)^{\vert J \vert} \; k^{\vert A \setminus \cup_{i \in J}C_i \vert - rk(A \setminus \cup_{i \in J}C_i)},
\end{flalign}
since $\mu(A,J)=(-1)^{\vert A \setminus J \vert}=(-1)^{\vert J \vert}$, for all $J \in 2^{[n-1]}$.
This immediately leads to the following theorem.

\begin{theorem}\label{theorem_acyclic}
Let $D=(V,A)$ be a complete acyclic digraph with $\vert V \vert =n$. For $1 \leq p \leq n$ denote by $(k_1,...,k_p)$ the composition of $n$ into $p$ parts, i.e. $\sum_{i=1}^p k_i=n$, with $k_i \geq 1$, $i=1,...,p$. Then the NL-flow polynomial is given by
$$ \phi_{NL}^D(x)= \sum_{p=1}^n (-1)^{p-1} \sum_{(k_1,...,k_p)} \prod_{i=1}^p x^{\binom{k_i -1}{2}}.$$
\end{theorem}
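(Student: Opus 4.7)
The plan is to start from the expression \eqref{dicuts} already derived and then (i) identify the dicuts of a complete acyclic digraph explicitly via the unique topological ordering, (ii) reindex the sum over $J\subseteq [n-1]$ by compositions of $n$, and (iii) evaluate the exponent $|A\setminus \cup_{i\in J}C_i|-\mathrm{rk}(A\setminus\cup_{i\in J}C_i)$ blockwise.

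First I would fix the unique topological ordering $v_1,\dots,v_n$ guaranteed by Lemma \ref{top.ordering} and observe that the $n-1$ dicuts are exactly $C_i=\{(v_a,v_b)\in A\mid a\le i<b\}$ for $i=1,\dots,n-1$: any dicut is determined by a partition $V=S\cup(V\setminus S)$ such that no arc goes backwards, and in the complete acyclic case this forces $S$ to be a prefix $\{v_1,\dots,v_i\}$ of the topological order. Now take $J=\{j_1<\dots<j_{p-1}\}\subseteq [n-1]$ and set $j_0:=0$, $j_p:=n$, $k_\ell:=j_\ell-j_{\ell-1}$. An arc $(v_a,v_b)$ lies in $\cup_{i\in J}C_i$ precisely if some $j_\ell$ separates $a$ from $b$, i.e.\ $v_a,v_b$ lie in different blocks of the partition $V=B_1\dot\cup\cdots\dot\cup B_p$ with $B_\ell=\{v_{j_{\ell-1}+1},\dots,v_{j_\ell}\}$. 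Consequently $A\setminus\cup_{i\in J}C_i$ is the disjoint union of the arc sets of the complete acyclic subdigraphs on the blocks $B_\ell$. The assignment $J\mapsto(k_1,\dots,k_p)$ is a bijection between subsets of $[n-1]$ of cardinality $p-1$ and compositions of $n$ into $p$ positive parts, and the sign in \eqref{dicuts} is $(-1)^{|J|}=(-1)^{p-1}$.

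It remains to compute the exponent for such a $J$. Each block contributes $\binom{k_\ell}{2}$ arcs, so $|A\setminus\cup_{i\in J}C_i|=\sum_{\ell=1}^p\binom{k_\ell}{2}$. The underlying graph of $A\setminus\cup_{i\in J}C_i$ has exactly $p$ connected components (one per block; singletons count as isolated-vertex components), hence its matroid rank is $n-p=\sum_\ell k_\ell-p$. Subtracting and using the identity $\binom{k}{2}-k+1=\binom{k-1}{2}$ blockwise gives
\[
|A\setminus\cup_{i\in J}C_i|-\mathrm{rk}(A\setminus\cup_{i\in J}C_i)=\sum_{\ell=1}^p\Bigl(\tbinom{k_\ell}{2}-k_\ell+1\Bigr)=\sum_{\ell=1}^p\tbinom{k_\ell-1}{2},
\]
so $x^{|A\setminus\cup_{i\in J}C_i|-\mathrm{rk}(A\setminus\cup_{i\in J}C_i)}=\prod_{\ell=1}^p x^{\binom{k_\ell-1}{2}}$. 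Plugging these three ingredients into \eqref{dicuts} and grouping subsets $J$ by their size $p-1$ yields the claimed formula.

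I do not expect any real obstacle: the only nontrivial piece is recognizing that the components of $D\setminus\cup_{i\in J}C_i$ are exactly the blocks determined by $J$ (so that rank and arc count both split additively), after which the algebraic identity $\binom{k}{2}-k+1=\binom{k-1}{2}$ repackages the exponent into the desired product form. If anything needs care, it is the bookkeeping that singleton blocks $k_\ell=1$ contribute the factor $x^{\binom{0}{2}}=x^0=1$ and are consistent with both the rank count and the convention that compositions have positive parts.
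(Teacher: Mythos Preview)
Your proposal is correct and follows essentially the same approach as the paper: both start from \eqref{dicuts}, identify the components of $D\setminus\bigcup_{i\in J}C_i$ as complete acyclic blocks whose sizes form a composition of $n$, compute the exponent as $\sum_\ell\binom{k_\ell}{2}-(n-p)$, and finish with the identity $\binom{m}{2}-(m-1)=\binom{m-1}{2}$. Your write-up is simply more explicit than the paper's---you spell out the description of the dicuts $C_i$ via the topological ordering and the bijection $J\leftrightarrow(k_1,\dots,k_p)$ in detail, whereas the paper leaves these to the reader.
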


\begin{proof}
Let $n \geq 2$, otherwise we have $\phi^D_{NL}(x)=1$, the empty flow. For $J \in 2^{[n-1]}$ let $D[C_J]$ denote the subgraph of $D$ induced by $A\setminus \cup_{i \in J} C_i$ and $p=\vert J \vert +1$ the number of connected components in $D[C_J]$. We only have to count the number of arcs in $D[C_J]$, since the rank is given by $n-p$. \\
Deleting $\vert J \vert$ dicuts of the given complete digraph yields a subgraph with $p$ strongly connected components, each containing $k_i \geq 1$, $i=1,..,p$, vertices and thus $\binom{k_i}{2}$ arcs, satisfying $\sum_{i=1}^p k_i =n$.\\
Since the digraph is complete and acyclic, every combination is presumed, hence, with (\ref{dicuts}), the number of NL-$k$-flows is given by
$$ \sum_{p=1}^n (-1)^{p-1} \sum_{\substack{(k_1,...,k_p) \\ \sum_{i=1}^p k_i=n}} k^{\sum_{i=1}^p \binom{k_i}{2}-(n-p)}.$$
The claim follows, using $\binom{m}{2}-(m-1)=\binom{m-1}{2}$, for all $m \in \N$.
\end{proof} 

Now we can compute several NL-flow polynomials of complete acyclic digraphs with $n$ vertices in comparably short time:\\

\begin{tabular}{|C{1cm}|C{10.5cm}|}
\hline 
& \\ 
$n$ & $\phi_{NL}^D(x)$  \\ \hline
& \\ 
$1$ & $1$ \\ \hline
& \\ 
$2$ & $0$  \\ \hline
& \\ 
$3$ & $x-1$  \\ \hline
& \\ 
$4$ & $x^3-2x+1$  \\ \hline
& \\ 
$5$ & $x^6-2x^3+x$  \\ \hline
& \\ 
$6$ & $x^{10}-2x^6+x^3-x^2+2x-1$  \\ \hline
& \\ 
$7$ & $ {x}^{15}-2\,{x}^{10}+{x}^{6}-2\,{x}^{4}+2\,{x}^{3}+3\,{x}^{2}-4\,x+1$ \\ \hline
& \\
$8$ & ${x}^{21}-2\,{x}^{15}+{x}^{10}-2\,{x}^{7}+{x}^{6}+6\,{x}^{4}-4\,{x}^{3}
-3\,{x}^{2}+2\,x$ \\ \hline
\end{tabular}
\\
\\

Obviously there are a lot of regularities and we can explicitely give the exponent of the two leading terms and their coefficients. 

\begin{proposition}
Let $D=(V,A)$ be a complete acyclic digraph with $n\geq 1$ vertices.
  \begin{enumerate}
  \item[(i)] The leading term of $\phi_{NL}^D(x)$ equals $x^{\binom{n-1}{2}}$.
  \item[(ii)] Assume $n\geq 4$. Then the second term with highest exponent
    equals $-2x^{\binom{n-2}{2}}$.
  \end{enumerate}
\end{proposition}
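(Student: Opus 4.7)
The plan is to read off the leading two terms directly from the formula in Theorem \ref{theorem_acyclic},
$$\phi_{NL}^D(x) = \sum_{p=1}^n (-1)^{p-1} \sum_{(k_1,\ldots,k_p)} x^{\sum_{i=1}^p \binom{k_i-1}{2}},$$
where the inner sum runs over compositions of $n$ into $p$ positive parts. The key fact driving both parts is that $k \mapsto \binom{k-1}{2}$ is strictly convex on $\{1,2,3,\ldots\}$ with $\binom{0}{2} = \binom{1}{2} = 0$. Hence, for fixed $p$, the exponent $\sum_i \binom{k_i-1}{2}$ is maximized by concentrating all the mass: the composition with one part equal to $n-p+1$ and $p-1$ parts equal to $1$, giving the value $\binom{n-p}{2}$. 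Strict convexity guarantees that this maximum is attained \emph{only} by compositions of this shape.

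For (i), the function $p \mapsto \binom{n-p}{2}$ is strictly decreasing in $p$ on $1 \leq p \leq n-2$, so the overall maximum of the exponent over all compositions is $\binom{n-1}{2}$, attained uniquely at $p=1$ by the composition $(n)$. This contributes $(-1)^{0}\, x^{\binom{n-1}{2}} = x^{\binom{n-1}{2}}$, and since no other term has this exponent, there is no cancellation.

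For (ii), one must identify all compositions whose exponent equals $\binom{n-2}{2}$. Compositions with $p \geq 3$ parts have exponent at most $\binom{n-3}{2}$, and the identity $\binom{n-2}{2} - \binom{n-3}{2} = n-3 > 0$ for $n \geq 4$ rules them out. Among $p=2$ compositions $(k_1,k_2)$ with $k_1+k_2=n$, strict convexity of $k \mapsto \binom{k-1}{2}$ forces $\{k_1,k_2\} = \{n-1, 1\}$ in order to attain the value $\binom{n-2}{2}$; for $n \geq 4$ one has $n-1 \neq 1$, so these are exactly the two ordered compositions $(n-1,1)$ and $(1,n-1)$. Each contributes $(-1)^{2-1}\, x^{\binom{n-2}{2}}$, for a total coefficient of $-2$. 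The only subtle point is ensuring that no $p \geq 3$ composition contaminates the coefficient of $x^{\binom{n-2}{2}}$, which is exactly what the hypothesis $n \geq 4$ guarantees via the convexity gap just computed.
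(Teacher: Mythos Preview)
Your proof is correct and follows essentially the same approach as the paper: both read off the leading two terms directly from the composition formula of Theorem~\ref{theorem_acyclic}. Your argument is considerably more detailed than the paper's, which simply asserts that the maximum exponent occurs at $p=1$ and the next one at the two $p=2$ compositions $(n-1,1)$ and $(1,n-1)$; you justify these assertions via the strict convexity of $k\mapsto\binom{k-1}{2}$ and explicitly compute the gap $\binom{n-2}{2}-\binom{n-3}{2}=n-3$ to explain why the hypothesis $n\ge 4$ is needed in~(ii).
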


\begin{proof}
We only need to consider the case where $p=1$, since the exponent of $\phi_{NL}^D(x)$ is maximum for $k_1=n$. The next lower exponent occurs when $p=2$, having $k_1=n-1$, $k_2=1$ and vice versa.   
\end{proof}

Let us now look at the constant term of the polynomial.

\begin{lemma}\label{recursion c(n)}
Let $D=(V,A)$ be a complete acyclic digraph with $n\geq 3$ vertices and $c(n)$ denote the constant term of $ \phi_{NL}^D(x)$. Then the following recursion holds
$$ c(n)=-(c(n-1)+c(n-2)).$$
\end{lemma}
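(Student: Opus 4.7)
The plan is to read off $c(n)$ directly from the closed formula in Theorem \ref{theorem_acyclic} and then obtain the recursion by a one-line combinatorial bijection on the first part of a composition.

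First I would observe that in
$$\phi_{NL}^D(x) = \sum_{p=1}^{n}(-1)^{p-1}\sum_{(k_1,\dots,k_p)}\prod_{i=1}^{p}x^{\binom{k_i-1}{2}},$$
a composition $(k_1,\dots,k_p)$ of $n$ contributes to the constant term if and only if $\binom{k_i-1}{2}=0$ for every $i$, which happens exactly when $k_i\in\{1,2\}$ for all $i$. Let $a_{n,p}$ denote the number of compositions of $n$ into exactly $p$ parts, each equal to $1$ or $2$. Then
$$c(n)=\sum_{p\ge 1}(-1)^{p-1}a_{n,p}.$$

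Next I would establish the Fibonacci-style recursion
$$a_{n,p}=a_{n-1,p-1}+a_{n-2,p-1}$$
by splitting the compositions according to the value of the first part: if $k_1=1$, the remaining $p-1$ parts form a composition of $n-1$ into $1$s and $2$s; if $k_1=2$, they form a composition of $n-2$. (Boundary cases like $n=1,2$ and $p=0,1$ cause no trouble with the convention $a_{0,0}=1$ and $a_{n,p}=0$ for $n<0$ or $p<0$.)

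Finally I would substitute this recursion into the sum and re-index by $q=p-1$:
\begin{align*}
c(n)&=\sum_{p\ge 1}(-1)^{p-1}\bigl(a_{n-1,p-1}+a_{n-2,p-1}\bigr)\\
&=\sum_{q\ge 0}(-1)^{q}a_{n-1,q}+\sum_{q\ge 0}(-1)^{q}a_{n-2,q}\\
&=-\sum_{q\ge 1}(-1)^{q-1}a_{n-1,q}-\sum_{q\ge 1}(-1)^{q-1}a_{n-2,q}\\
&=-\bigl(c(n-1)+c(n-2)\bigr),
\end{align*}
noting that the $q=0$ terms vanish since $a_{n-1,0}=a_{n-2,0}=0$ for $n\ge 3$. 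This is precisely the asserted recursion. There is no real obstacle here; the only thing to double-check is the small-$n$ boundary (ensuring no missing contributions at $p=1$ or $p=n$), which amounts to verifying that the formula $c(n)=\sum_{p=\lceil n/2\rceil}^{n}(-1)^{p-1}\binom{p}{n-p}$ already matches the tabulated values $c(3)=-1,\,c(4)=1,\,c(5)=0,\,c(6)=-1$.
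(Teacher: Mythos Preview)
Your argument is correct and is essentially the same as the paper's proof: both isolate the constant term as the signed count of compositions of $n$ into parts from $\{1,2\}$, split according to whether the first part is $1$ or $2$, and re-index to obtain $-(c(n-1)+c(n-2))$. The only cosmetic difference is that you name the counts $a_{n,p}$ explicitly before summing, whereas the paper manipulates the sums directly.
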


\begin{proof}
Since we are interested in the constant term of $\phi_{NL}^D(x)$ we only need to consider the cases where $k_i \in \lbrace 1,2 \rbrace$ for all $1 \leq i \leq n$ and get the following distinction.
\begin{flalign*}
c(n)&=\sum_{\substack{k_2+...+k_p=n-1\\k_1=1\\k_i \in \lbrace 1,2 \rbrace}}(-1)^{p-1}+\sum_{\substack{k_2+...+k_p=n-2\\k_1=2\\k_i \in \lbrace 1,2 \rbrace}}(-1)^{p-1}\\
&\overset{r:=p-1}{=} -\sum_{\substack{k_1+...+k_r=n-1\\k_i \in \lbrace 1,2 \rbrace}}(-1)^{r-1} - \sum_{\substack{k_1+...+k_r=n-2\\k_i \in \lbrace 1,2 \rbrace}}(-1)^{r-1} \\
&= -( c(n-1)+c(n-2)).
\end{flalign*}
\end{proof}

This observation yields the following proposition.

\begin{proposition}\label{c(n)}
Let $D=(V,A)$ be a complete acyclic digraph with $n\geq 1$ vertices, then the constant term of $ \phi_{NL}^D(x)$ is given by
$$ c(n)=\phi_{NL}^D(0)=
\begin{cases}
-1 &\text{ , if n mod } 3 =0, \\
\;\;\;1  &\text{ , if n mod } 3 = 1, \\
\;\;\;0  &\text{ , if n mod } 3 =2.
\end{cases}$$
\end{proposition}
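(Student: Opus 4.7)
The strategy is a direct induction on $n$, using the recursion in Lemma \ref{recursion c(n)} as the induction step and handling the first few values by hand.

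First I would establish the base cases. From Theorem \ref{theorem_acyclic} (or the explicit table preceding the proposition) we read off $c(1)=1$, $c(2)=0$ and $c(3)=-1$. These three values already agree with the claimed formula for $n\bmod 3\in\{1,2,0\}$ respectively, so the proposition is true for $n\in\{1,2,3\}$.

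For the inductive step I would assume the formula for $c(n-1)$ and $c(n-2)$, with $n\geq 4$, and compute $c(n)$ via Lemma \ref{recursion c(n)}. There are three cases according to $n\bmod 3$, and in each case the two predecessors $n-1$ and $n-2$ contribute the remaining two residues in $\{0,1,2\}$, exactly one of which contributes $0$. Concretely, if $n\equiv 0\pmod 3$ then $c(n)=-(c(n-1)+c(n-2))=-(0+1)=-1$; if $n\equiv 1\pmod 3$ then $c(n)=-(-1+0)=1$; and if $n\equiv 2\pmod 3$ then $c(n)=-(1+(-1))=0$. This is exactly the claimed value in each case.

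There is no serious obstacle: the work is a two-line verification once the recursion and the initial values are in hand. Conceptually, the period-$3$ behavior is the shadow of the characteristic polynomial $t^2+t+1$ of the recursion, whose roots are the primitive third roots of unity; but for the proof itself there is no need to invoke this, the three-case induction above is the cleanest presentation.
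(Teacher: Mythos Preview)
Your proof is correct and follows essentially the same approach as the paper: both use the recursion from Lemma~\ref{recursion c(n)} together with the initial values $c(1)=1$, $c(2)=0$, $c(3)=-1$. The only cosmetic difference is that the paper iterates the recursion once to obtain $c(n+3)=c(n)$ directly and then appeals to the three base cases, whereas you do a three-case induction on $n\bmod 3$; the content is the same.
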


\begin{proof}
Lemma \ref{recursion c(n)} immediately yields 
$$ c(n+3)=-\big(c(n+2)+c(n+1)\big)=-\Big(-\big(c(n+1)+c(n)\big)+c(n+1)\Big)=c(n)$$
and the base cases from above prove the claim.
\end{proof} 

Observing the linear term we get:

\begin{proposition}
Let $D=(V,A)$ be a complete acyclic digraph with $n\geq 4$ vertices, then
the linear term of $ \phi_{NL}^D(x)$ is given by
$$l(n)=\frac{1}{3}
\begin{cases}
n &\text{ , if n mod } 3 =0, \\
-2(n-1)  &\text{ , if n mod } 3 = 1, \\
n-2  &\text{ , if n mod } 3 =2.
\end{cases}$$
\end{proposition}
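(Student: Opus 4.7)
The plan is to mimic the approach used for the constant term (Lemma \ref{recursion c(n)} and Proposition \ref{c(n)}): first isolate the compositions that contribute to the coefficient of $x$ in the formula of Theorem \ref{theorem_acyclic}, then derive a short linear recursion, and finally verify the closed form by induction using the already known values of $c(n)$. By Theorem \ref{theorem_acyclic} a composition $(k_1,\ldots,k_p)$ of $n$ contributes $(-1)^{p-1}x^{\sum_i\binom{k_i-1}{2}}$. Since $\binom{k-1}{2}$ vanishes for $k\in\{1,2\}$, equals $1$ for $k=3$, and is at least $3$ for $k\geq 4$, the coefficient of $x$ is the signed count of those compositions of $n$ that have exactly one part equal to $3$ and all remaining parts in $\{1,2\}$.

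The second step is to split such a composition on the value of its first part $k_1$, exactly as in the proof of Lemma \ref{recursion c(n)}. If $k_1\in\{1,2\}$, the tail is a composition of $n-k_1$ of the same type, and the sign shift $(-1)^{p-1}=-(-1)^{p-2}$ contributes $-l(n-1)$ and $-l(n-2)$, respectively. If $k_1=3$, the tail is an arbitrary composition of $n-3$ with parts in $\{1,2\}$, so the same sign shift together with the definition of $c$ (see the proof of Lemma \ref{recursion c(n)}) yields $-c(n-3)$. This gives the recursion
$$l(n)=-l(n-1)-l(n-2)-c(n-3)\qquad\text{for }n\geq 4,$$
using the conventions $l(1)=l(2)=0$ and $l(3)=1$ coming from the enumeration above.

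Finally, the closed form is verified by induction on $n$. The base cases $l(4)=-2$ and $l(5)=1$ are read off the table (and match the formula), and the induction step for $n\geq 6$ amounts to checking the identity $l(n)+l(n-1)+l(n-2)+c(n-3)=0$. By the inductive hypothesis together with Proposition \ref{c(n)} the left-hand side depends only on $n\bmod 3$, and a short case split in each of the three residue classes collapses the sum of the three $l$-values to $1$, $-1$, or $0$, which exactly cancels the value $-1$, $1$, or $0$ of $c(n-3)$ supplied by Proposition \ref{c(n)}. The only mildly delicate point, and the place where careful bookkeeping is needed, is to track the mod-$3$ shift correctly when reading off $l(n-1)$, $l(n-2)$, and $c(n-3)$ from the respective formulas for a given residue of $n$; once this is in order, each of the three residue cases is an immediate one-line arithmetic identity, so no real computational obstacle is expected.
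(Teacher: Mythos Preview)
Your proposal is correct and follows essentially the same route as the paper: identify the contributing compositions as those with exactly one part equal to $3$ and the remaining parts in $\{1,2\}$, split off one end part to obtain the recursion $l(n)=-l(n-1)-l(n-2)-c(n-3)$, and then verify the closed form by induction using Proposition~\ref{c(n)}. The only cosmetic difference is that you peel off the first part $k_1$ whereas the paper peels off the last part $k_p$; the resulting recursion and the inductive verification are identical.
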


\begin{proof}
In this case exactly one part of the composition, call it $k_j$, equals $3$, while the other parts have to be either $1$ or $2$. Let $c(n)$ be the constant term of $ \phi_{NL}^D(x)$, then we have
\begin{flalign*}
l(n)&= \sum_{\substack{k_1+...+k_{p-1}=n-1\\j\neq p\\k_i \in \lbrace 1,2 \rbrace, i\neq j}} (-1)^{p-1} 
+ \sum_{\substack{k_1+...+k_{p-1}=n-2\\j \neq p\\k_i \in \lbrace 1,2 \rbrace, i\neq j}} (-1)^{p-1} 
+ \sum_{\substack{k_1+...+k_{p-1}=n-3\\j=p\\k_i \in \lbrace 1,2 \rbrace}} (-1)^{p-1}\\
&=-l(n-1)-l(n-2)-c(n-3)
\end{flalign*}

Now we can proceed per induction, using Proposition \ref{c(n)}.
\begin{flalign*}
l(n+1)&=-l(n)-l(n-1)-c(n-2)\\
&\overset{IV}{=} -\frac{1}{3} \begin{cases}
n \\
-2(n-1)\\
n-2
\end{cases}
-\frac{1}{3}\begin{cases}
(n-1)-2\\
n-1\\
-2((n-1)-1)
\end{cases}
- \begin{cases}
1 &\text{ , if n mod } 3 = 0\\
0 &\text{ , if n mod } 3 = 1\\
-1 &\text{ , if n mod } 3 = 2
\end{cases}\\
&= \;\;\;\;\frac{1}{3} \begin{cases}
-2((n+1)-1) &\text{ , if n+1 mod } 3 = 1\\
(n+1)-1 &\text{ , if n+1 mod } 3 = 2\\
n+1 &\text{ , if n+1 mod } 3 = 0
\end{cases}
\end{flalign*}
\end{proof}

\subsection{Complete digraphs}

Considering an arbitrary complete digraph $D=(V,A)$ the NL-flow polynomial depends on its orientation. Let $d \in \mathbb{N}$ denote the number of maximal strongly connected components and denote their vertex sets with $S_1,...,S_d$. Since we cannot cut through cycles there are exactly $d-1$ dicuts and the poset $\mathcal{C}$ is isomorphic to $2^{[d-1]}$. Similarly as in (\ref{dicuts}) we conclude
\begin{flalign}\label{dicuts_complete}
\phi_{NL}^D(k) = \sum_{J \in 2^{[d-1]}} (-1)^{\vert J \vert} \; k^{\vert A \setminus \cup_{i \in J}C_i \vert - rk(A \setminus \cup_{i \in J}C_i)},
\end{flalign}
where $C_i, i=1,...,d-1$ denote the dicuts in $D$. \\
Recall that the maximal strongly connected components form a partition of the given digraph. Consequently we consider the following map, called condensation 
\begin{flalign*} 
\lambda : V &\rightarrow \lbrace 1,...,d \rbrace \\
v &\mapsto i, \text{ with }v \in S_i,
\end{flalign*}
which induces the complete acyclic digraph on $d$ vertices.\\
\\

\begin{minipage}{\linewidth}
\begin{tabular}{p{5cm}p{2cm}p{5cm}}
\begin{tikzpicture}
  [scale=.5,auto=left]
  
  \node (n1) at (1,5) [circle,fill=black!80] {};
  \node (n2) at (4,7) [circle,fill=black!40] {};
  \node (n3) at (8,7) [circle,fill=black!40] {};
  \node (n4) at (11,5)[circle,fill=black!80] {};
  \node (n5) at (8,3) [circle,fill=black!40] {};
  \node (n6) at (4,3) [circle,fill=black!40] {};
  
  \foreach \from/\to in {n1/n2,n1/n3,n1/n4,n1/n5,n1/n6,n2/n4,n3/n4,n5/n4,n6/n4,n2/n3,n3/n5,n5/n6,n6/n2,n2/n5,n6/n3}
  \path [->] (\from) edge [draw=black] (\to);
  
\end{tikzpicture}&
$$\overset{\lambda}{\longrightarrow}$$& 
\begin{tikzpicture}
  [scale=.5,auto=left]
  
  \node (n1) at (1,3) [circle,fill=black!80] {};
  \node (n2) at (4.5,7) [circle,fill=black!40] {};
  \node (n3) at (8,3) [circle,fill=black!80] {};  
  
  \foreach \from/\to in {n1/n2,n1/n3,n2/n3}
    \draw[->] (\from) -> (\to);   
\end{tikzpicture}
\end{tabular}

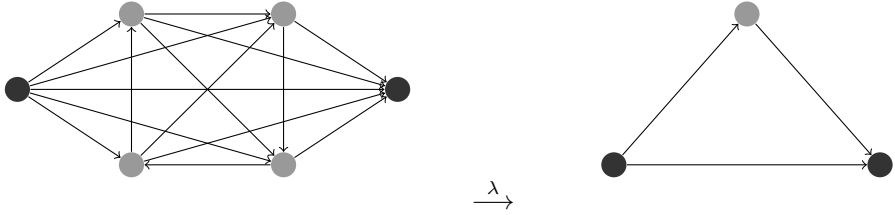
\captionof{figure}{condensation of a tournament}
\label{fig:M2}
\end{minipage}
\\
\\

As a result of Lemma \ref{top.ordering} the vertices of $D[\lambda(V)]$ can be ordered topologically, thus the strongly connected components of $D$ allow a similar ordering.

\begin{theorem}
Let $D=(V,A)$ be a complete digraph with $d\geq 1$ strongly connected components, each containing $k_1,...,k_d$ vertices, such that the subgraph of $D$ induced by $\lambda(V)$ is topologically ordered. For $1 \leq p \leq d$ consider the composition $(d_1,...,d_p)$ of $d$ into $p$ parts, i.e.\, $\sum_{i=1}^p d_i=d$, with $d_i \geq 1$, for all $1 \leq i\leq p$. Then the NL-flow polynomial is given by
\begin{flalign*}
\phi_{NL}^D(x)&= \sum_{p=1}^d (-1)^{p-1} \sum_{\substack{(d_1,...,d_p)}} \prod_{\substack{j=1}}^{p} x^{\binom{n_j-1}{2}},\text{ with} \\
n_j&:= \sum_{s=\delta(j-1)+1}^{\delta(j)}k_{s} \text{ and } \delta(j):=\sum_{r=1}^{j}d_r.
\end{flalign*}
\end{theorem}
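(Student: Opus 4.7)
The plan is to mimic the proof of Theorem~\ref{theorem_acyclic}, replacing each single vertex of the acyclic condensation by the corresponding strongly connected component of $D$.

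First I identify subsets $J\subseteq[d-1]$ with compositions of $d$. Since the condensation $D[\lambda(V)]$ is a complete acyclic digraph on $d$ vertices, Lemma~\ref{top.ordering} supplies a unique topological ordering, and we may index the dicuts $C_1,\ldots,C_{d-1}$ so that $C_i$ separates $S_1\cup\cdots\cup S_i$ from $S_{i+1}\cup\cdots\cup S_d$. For $J=\{i_1<\cdots<i_{p-1}\}$, the removal of the dicuts in $J$ groups the strongly connected components into $p$ consecutive blocks whose sizes form the composition $(d_1,\ldots,d_p)=(i_1,\,i_2-i_1,\,\ldots,\,d-i_{p-1})$ of $d$. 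This is a bijection between $\{J\in 2^{[d-1]}:|J|=p-1\}$ and compositions of $d$ into $p$ parts, under which $(-1)^{|J|}=(-1)^{p-1}$.

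Second, I evaluate the exponent of $k$ in~(\ref{dicuts_complete}) for each such $J$. Any arc between two strongly connected components lying in different blocks belongs to at least one dicut $C_i$ with $i\in J$; on the other hand, an arc between two components lying in the same block $j$ only belongs to dicuts $C_i$ with $\delta(j-1)<i<\delta(j)$, none of which is in $J$. Hence the induced subdigraph on block $j$ agrees with the induced subdigraph in $D$ and is itself a complete digraph on $n_j$ vertices, with $\binom{n_j}{2}$ arcs and rank $n_j-1$. Summing over blocks, the total exponent equals
$$\sum_{j=1}^{p}\left(\binom{n_j}{2}-(n_j-1)\right)=\sum_{j=1}^{p}\binom{n_j-1}{2},$$
using the identity $\binom{m}{2}-(m-1)=\binom{m-1}{2}$.

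Plugging this exponent and the sign $(-1)^{p-1}$ into~(\ref{dicuts_complete}) and reindexing the outer sum as a sum over compositions of $d$ yields the claimed formula. The only step requiring a genuine argument beyond the acyclic case of Theorem~\ref{theorem_acyclic} is the inter-/intra-block arc bookkeeping above; everything else is formally identical.
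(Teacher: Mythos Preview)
Your proof is correct and follows essentially the same approach as the paper: both reduce to the acyclic case of Theorem~\ref{theorem_acyclic} via the condensation, identify subsets $J\subseteq[d-1]$ with compositions of $d$, and compute the exponent by recognizing each block as a complete tournament on $n_j$ vertices. Your write-up is in fact more explicit than the paper's about the arc and rank bookkeeping, which the paper leaves to the phrase ``analogously to the proof of Theorem~\ref{theorem_acyclic}''.
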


\begin{proof}
Denote the strongly connected components of $D$ with $K_1,...,K_d$, such that the topologically ordering of $\lambda(V)$ is preserved. Analoguesly to the proof of Theorem \ref{theorem_acyclic} we only have to count the number of vertices in each partition of $D[\lambda(V)]$ induced by some composition $(d_1,...,d_p)$, where each vertex $1 \leq v \leq d$ corresponds to a strongly connected component $K_v$, each containing $k_v$ vertices in $D$. \\
So, let $(d_1,...,d_p)$ be an arbitrary composition of $d$ with $p$ parts, hence there are $d_j$, $1 \leq j \leq p$, vertices in each part of $D[\lambda(V)]$. Let $D_j$ denote the set of vertices in the corresponding strongly connected components in $D$. Then 
\begin{flalign*}
D_1= \bigcup_{i=1}^{d_1} K_i,\;
D_2= \bigcup_{i=d_{1}+1}^{d_1+d_2} K_i\;,..., \;
D_p=\bigcup_{i=d_1+...+d_{p-1}+1}^{d_1+...+d_{p}} K_i.
\end{flalign*}
Thus there are
$$\vert D_j \vert= \sum_{i=\sum_{r=1}^{j-1} d_{r}+1}^{\sum_{r=1}^{j}d_r} k_i$$ 
vertices in the $j$-th corresponding part of $D$.
\end{proof}

\section{Open Problems}

Whereas the computation of the flow polynomial of complete graphs seems to be quite hard in contrast to the computation of the chromatic polynomial of complete graphs, it turns out that we find the contrary in the directed case. We have no idea how the dichromatic polynomial of complete digraphs looks like while the computation of the NL-flow polynomial was not that challenging.\\

Considering the cographic oriented matroid our flow polynomial becomes the NL-coflow polynomial of $D$ which equals the chromatic polynomial \cite{altenbokum} for the dichromatic number divided by $x$. The natural question arises whether, as in the classical case, there exists a meaningful two variable polynomial combining both? Moreover, does such a polynomial or the two single variable polynomials have any meaning in the case of general oriented matroids?

\bibliography{NL-flow}{}
\bibliographystyle{siam}

\end{document}